\newtheorem{theorem}{Theorem}[section]
\newtheorem{lemma}[theorem]{Lemma}
\newtheorem{corollary}[theorem]{Corollary}
\newtheorem{proposition}[theorem]{Proposition}
\newtheorem{conjecture}[theorem]{Conjecture}
\title{\textsf{The maximum spectral radius of irregular bipartite graphs}}
\author{{Jie Xue$^{a}$}, {Ruifang Liu$^{a}$}\thanks{Corresponding author. E-mail address: rfliu@zzu.edu.cn (Liu).},
{ Jiaxin Guo$^{a}$}, {Jinlong Shu$^{b}$}
\medskip
\\
{\footnotesize $^a$School of Mathematics and Statistics, Zhengzhou University, Zhengzhou, China}\\
{\footnotesize $^b$School of Computer Science and Technology, East China Normal University, Shanghai, China}}
\date{} % Activate to display a given date or no date (if empty),
\begin{document}
\maketitle

\begin{abstract}
\maketitle
A bipartite graph is subcubic if it is an irregular bipartite graph with maximum degree three.
In this paper, we prove that the asymptotic value of maximum spectral radius over subcubic bipartite graphs of order $n$ is
$3-\varTheta(\frac{\pi^{2}}{n^{2}})$. Our key approach is taking full advantage of the eigenvalues of certain tridiagonal matrices,
due to Willms [SIAM J. Matrix Anal. Appl. 30 (2008) 639--656].
Moreover, for large maximum degree, i.e., the maximum degree is at least $\lfloor n/2 \rfloor$,
we characterize irregular bipartite graphs with maximum spectral radius.
For general maximum degree, we present an upper bound on the spectral radius of irregular bipartite graphs in terms of the order and maximum degree.

\bigskip
\noindent {\bf AMS Classification:} 05C50

\noindent {\bf Key words:} Spectral radius, Bipartite graph, Irregular, Subcubic, Maximum degree
\end{abstract}

\section{Introduction}
The spectral radius of a graph is the largest eigenvalue of its adjacency matrix.
A classical issue in spectral graph theory is the Brualdi-Solheid problem \cite{Brualdi1986},
which aims to characterize graphs with extremal values of the spectral radius in a given class of graphs.
A lot of results concerning the Brualdi-Solheid problem were presented,
and some of these results were exhibited in a recent monograph on the spectral radius by Stevanovi\'{c} \cite{Stevanovic2018}.

Let $G$ be a connected graph on $n$ vertices with maximum degree $\Delta$.
The spectral radius of $G$ is denoted by $\rho(G)$, or simply $\rho$ when there is no scope for confusion.
By Perron-Frobenius theorem and Rayleigh quotient, it is easy to deduce a well-known upper bound $\rho(G)\leq \Delta$, with equality if and only if $G$ is regular.
This also yields that the spectral radius of a connected graph is strictly less than its maximum degree when the graph is irregular.
It is natural to ask what is the maximal value of the spectral radius of irregular graphs, which can be regarded as the Brualdi-Solheid problem for irregular graphs.
An equivalent statement of this problem is how small $\Delta-\rho$ can be when the graph $G$ is irregular.
A lower bound for $\Delta-\rho$ was first given by Stevanovi\'{c} in \cite{Stevanovic2004}.

\begin{theorem}{\rm(\cite{Stevanovic2004})}\label{th5}
  Let $G$ be a connected irregular graph of order $n$ and maximum degree $\Delta$. Then
  \begin{equation}
    \Delta-\rho(G)>\frac{1}{2n(n\Delta-1)\Delta^{2}}.
  \end{equation}
\end{theorem}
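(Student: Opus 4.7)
The plan is to exploit the positive Perron eigenvector of $A(G)$, combined with the rigidity forced by irregularity, through a discrete Dirichlet-type identity. Let $x$ be a positive eigenvector for $\rho=\rho(G)$, normalized so that $x_{\max}:=\max_i x_i=1$ is attained at some vertex $v$. Starting from $(\Delta I - A)x = (\Delta-\rho)x$, I would take the inner product with $x$ and rewrite $\Delta x_i = d_i x_i + (\Delta-d_i)x_i$ to obtain the key identity
\begin{equation*}
(\Delta-\rho)\,\|x\|_2^{2} \;=\; \sum_{i\in V(G)} (\Delta-d_i)\,x_i^{2} \;+\; \sum_{ij\in E(G)}(x_i-x_j)^{2}.
\end{equation*}
Both terms on the right are nonnegative, and the goal is to lower-bound at least one of them using the hypothesis that $G$ is irregular.

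Next, since $G$ is irregular I would pick a vertex $u$ with $d_u\leq \Delta-1$, so that the degree-defect sum is at least $x_u^{2}$. For the edge-difference sum, I would connect $u$ to the maximizer $v$ by a shortest path $u=w_0,w_1,\ldots,w_k=v$ of length $k\leq n-1$ and apply Cauchy--Schwarz:
\begin{equation*}
\sum_{s=0}^{k-1}(x_{w_s}-x_{w_{s+1}})^{2} \;\geq\; \frac{(x_v-x_u)^{2}}{k} \;\geq\; \frac{(1-x_u)^{2}}{n-1}.
\end{equation*}
Together with the trivial bound $\|x\|_2^{2}\leq n$, the identity gives
\begin{equation*}
(\Delta-\rho)\,n \;\geq\; x_u^{2} + \frac{(1-x_u)^{2}}{n-1},
\end{equation*}
and minimizing the right-hand side over $x_u\in[0,1]$ (the minimum occurs at $x_u=1/n$) already yields a qualitative lower bound of order $1/n^{2}$.

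To sharpen this into the announced form $\frac{1}{2n(n\Delta-1)\Delta^{2}}$, I would refine the inputs using the eigenequation $\rho x_i=\sum_{j\sim i}x_j$ more carefully. In particular, $\rho x_v=\sum_{j\sim v}x_j \leq d_v\leq \Delta$ gives $\rho\leq\Delta$, while $\rho x_u\leq d_u\leq \Delta-1$ forces $x_u\leq (\Delta-1)/\rho$, so the gap $1-x_u$ is at least $(\rho-\Delta+1)/\rho$ or, upon iterating once more, $(\Delta^{2}-\rho^{2})/(\rho\Delta)$. Substituting this into the Cauchy--Schwarz bound above and replacing $\Delta-\rho$ on the left by $(\Delta^{2}-\rho^{2})/(\Delta+\rho)\geq(\Delta^{2}-\rho^{2})/(2\Delta)$ is precisely what introduces the factor $\Delta^{2}$ in the denominator; the combinatorial factor $n(n\Delta-1)$ then arises from bounding $\|x\|_2^{2}$ and the path length $k$ in terms of $n$ and $\Delta$.

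The main obstacle, I expect, is the bookkeeping needed to match the exact constant: the Perron-entry estimates, the length-$k$ path bound, and the substitution $\Delta-\rho=(\Delta^{2}-\rho^{2})/(\Delta+\rho)$ all introduce small slacks, and avoiding loss requires choosing the path (or a walk of length two) carefully so that the extremal vertex~$u$ of deficient degree is handled tightly. Everything else is linear algebra and Cauchy--Schwarz; the delicate part is ensuring that the final constant is $\frac{1}{2n(n\Delta-1)\Delta^{2}}$ rather than a weaker companion expression.
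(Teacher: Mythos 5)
The paper itself does not prove Theorem \ref{th5}; it is quoted verbatim from \cite{Stevanovic2004}, so there is no in-paper proof to match. Your first two paragraphs are nevertheless a correct and essentially complete argument --- in fact for a \emph{stronger} bound. The decomposition $(\Delta-\rho)\|x\|_2^2=\sum_i(\Delta-d_i)x_i^2+\sum_{ij\in E}(x_i-x_j)^2$ is valid, the defect term contributes at least $x_u^2$, the Cauchy--Schwarz bound along a shortest path contributes at least $(1-x_u)^2/(n-1)$, and $\|x\|_2^2<n$ holds strictly because the Perron vector of an irregular graph is nonconstant; minimizing $t^2+\tfrac{(1-t)^2}{n-1}$ at $t=1/n$ gives value $1/n$, hence $\Delta-\rho>1/n^2$. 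Since a connected irregular graph has $\Delta\geq 2$ and $n\geq 3$, one has $2n(n\Delta-1)\Delta^2>n^2$, and the stated inequality follows immediately. Your third paragraph, however, rests on a misreading of the target: $\tfrac{1}{2n(n\Delta-1)\Delta^2}$ is \emph{weaker} than $1/n^2$, so no ``sharpening'' is required, the Perron-entry refinements and the substitution $\Delta-\rho=(\Delta^2-\rho^2)/(\Delta+\rho)$ are unnecessary detours, and the ``main obstacle'' of matching the constant does not exist --- the only missing step is the one-line comparison above. On methods: Stevanovi\'{c}'s original proof propagates eigenvector estimates along paths directly, which is why it only reaches the constant in the statement; your route is the later quadratic-form (Dirichlet) approach of Cioab\u{a}--Gregory--Nikiforov, and it is exactly the machinery this paper deploys in Section 4 --- compare the identity (\ref{eq13}), the Cauchy--Schwarz step along a shortest path in the proof of Theorem \ref{th4}, and Shi's inequality (\ref{eq14}), which is precisely your minimization over $x_u$.
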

Some lower bounds for $\Delta-\rho(G)$ under other graph parameters,
such as the diameter and the minimum degree, were established in \cite{Cioaba2007EJC,Cioaba2007JCTB,Liu2007,Shi2009,Zhang2005}.
In particular, Cioab\u{a} \cite{Cioaba2007EJC} presented the following lower bound, which confirmed a conjecture in \cite{Cioaba2007JCTB}.
\begin{theorem}{\rm(\cite{Cioaba2007EJC})}
  Let $G$ be a connected irregular graph with $n$ vertices, maximum degree $\Delta$ and diameter $D$. Then
  \begin{equation}
    \Delta-\rho(G)>\frac{1}{nD}.
  \end{equation}
\end{theorem}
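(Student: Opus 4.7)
The plan is to use the Rayleigh-quotient identity for $\Delta-\rho(G)$ on the positive Perron eigenvector, combined with a short path argument linking a maximum-entry vertex to an irregular vertex. To begin, I would take the Perron eigenvector $x$ of $G$, normalized so that $\|x\|_\infty=1$ with the maximum attained at some vertex $u$, and record the exact identity
\begin{equation*}
(\Delta-\rho(G))\,x^{\top}x=\sum_{v\in V(G)}(\Delta-d(v))\,x_v^{2}+\sum_{ij\in E(G)}(x_i-x_j)^{2},
\end{equation*}
obtained by splitting $\Delta I-A=(\Delta I-D)+(D-A)$ and recognizing the second piece as the Laplacian quadratic form on $x$. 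This reduces the problem to lower-bounding the right-hand numerator and upper-bounding $x^{\top}x$.

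Next I would pick an irregular vertex $w$ (available because $G$ is not regular) with $d(w)\leq\Delta-1$ and fix a shortest $u$-$w$ path $P:u=v_0,v_1,\ldots,v_\ell=w$ of length $\ell\leq D$. Two localized lower bounds then feed the numerator. The degree term at $v=w$ is at least $(\Delta-d(w))\,x_w^{2}\geq x_w^{2}$, and keeping only the edges of $P$ in the edge sum and applying Cauchy--Schwarz gives
\begin{equation*}
\sum_{ij\in E(G)}(x_i-x_j)^{2}\geq\sum_{i=0}^{\ell-1}(x_{v_i}-x_{v_{i+1}})^{2}\geq\frac{(1-x_w)^{2}}{\ell}.
\end{equation*}
Combining these with the bound $x^{\top}x\leq n$ (since every $x_v\leq 1$) and minimizing the resulting expression over the free parameter $x_w\in[0,1]$ yields
\begin{equation*}
\Delta-\rho(G)\geq\frac{1}{n}\min_{t\in[0,1]}\!\left(t^{2}+\frac{(1-t)^{2}}{\ell}\right)=\frac{1}{n(\ell+1)},
\end{equation*}
with the minimum attained at $t=1/(\ell+1)$.

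The main obstacle I expect is upgrading this chain of non-strict inequalities to the announced strict bound $\Delta-\rho(G)>\frac{1}{nD}$. Strictness itself is cheap: since $G$ is irregular the Perron eigenvector cannot be constant, hence $x^{\top}x<n$, and the two local estimates discard almost all terms of the identity. The subtle point is the denominator constant: when $\ell\leq D-1$ the output $1/(n(\ell+1))$ already dominates $1/(nD)$, but when $\ell=D$ one is only left with $1/(n(D+1))$, and closing this gap appears to require exploiting that equality in the Cauchy--Schwarz step would force the differences $x_{v_i}-x_{v_{i+1}}$ to be constant along $P$, which together with the eigenvalue equations at the interior path vertices and the existence of a vertex of degree less than $\Delta$ is incompatible. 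Turning this qualitative rigidity into a quantitative gain along a worst-case diameter path is where I anticipate the bulk of the technical effort.
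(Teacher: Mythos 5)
There is a genuine gap, and you have in fact diagnosed it yourself: your argument proves $\Delta-\rho(G)>\frac{1}{n(\ell+1)}\geq\frac{1}{n(D+1)}$, not the claimed $\frac{1}{nD}$. Each individual step is correct (the decomposition $\Delta I-A=(\Delta I-D)+(D-A)$, the bound $x^{\top}x<n$ under $\|x\|_\infty=1$, Cauchy--Schwarz along the path, and the minimization $\min_{t}\bigl(t^{2}+(1-t)^{2}/\ell\bigr)=\frac{1}{\ell+1}$), but in the worst case $\ell=D$ the machinery provably tops out at $\frac{1}{n(D+1)}$. That weaker bound is essentially the earlier result of Cioab\u{a}--Gregory--Nikiforov \cite{Cioaba2007JCTB}; the whole point of the cited theorem \cite{Cioaba2007EJC} is the improvement of the denominator from $n(D+1)$ to $nD$, so the missing step is not a technicality but the entire content of the result. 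Your proposed repair --- exploiting that equality in Cauchy--Schwarz would force constant increments along $P$, which is incompatible with the eigenvalue equations --- only yields a qualitative strict inequality above $\frac{1}{n(D+1)}$; it gives no mechanism for the quantitative jump to $\frac{1}{nD}$, and one cannot assume the nearest unsaturated vertex lies at distance less than $D$ from the maximum-entry vertex. Closing the gap requires an additional idea, e.g.\ not discarding so much of the identity: one must either extract more from the total degree deficiency $\sum_{v}(\Delta-d(v))=n\Delta-2m$, or bound $x^{\top}x$ away from $n$ using the decay of the eigenvector entries along the path, rather than using the crude estimates $x^{\top}x\leq n$ and $(\Delta-d(w))x_w^2\geq x_w^2$ at a single vertex.

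For context within this paper: the statement is quoted from \cite{Cioaba2007EJC} and not proved here, so there is no internal proof to compare against. The closest analogue is the proof of Theorem \ref{th4}, which uses exactly your skeleton (the identity (\ref{eq13}), a shortest path, Cauchy--Schwarz, and Shi's inequality (\ref{eq14}), which is your minimization in disguise). It succeeds with denominator $2k+1$ rather than $2k$ only because the bipartite setting supplies two extra resources you do not have: the path length is bounded by $2(n-1)/\Delta$ via Lemma \ref{lem7} instead of by $D$, and Lemma \ref{lem6}(I) guarantees total degree deficiency at least $2$. Your argument as written establishes only the weaker bound $\Delta-\rho(G)>\frac{1}{n(D+1)}$.
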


Another approach is to determine the asymptotic value of the maximum spectral radius for irregular graphs.
Denote by $\mathcal{F}(n,\Delta)$ the set of all connected irregular graphs on $n$ vertices with maximum degree $\Delta$.
Let $\rho(n,\Delta)$ be the maximum spectral radius of graphs in $\mathcal{F}(n,\Delta)$, that is,
$$\rho(n,\Delta)=\max\{\rho(G): G\in \mathcal{F}(n,\Delta)\}.$$
In \cite{Liu2007}, Liu, Shen and Wang proposed a conjecture for the asymptotic value of $\rho(n,\Delta)$.

\begin{conjecture}{\rm(\cite{Liu2007})}\label{conj1}
  For each fixed $\Delta$, the limit of $n^{2}(\Delta-\rho(n,\Delta))/(\Delta-1)$ exists. Furthermore,
  $$\lim_{n\rightarrow \infty}\frac{n^{2}(\Delta-\rho(n,\Delta))}{\Delta-1}=\pi^{2}.$$
\end{conjecture}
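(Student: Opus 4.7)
The plan is two-sided: a construction giving $\rho(n,\Delta)\ge \Delta-(1+o(1))\pi^{2}(\Delta-1)/n^{2}$, and a matching universal upper bound showing every $G\in\mathcal{F}(n,\Delta)$ satisfies $\rho(G)\le \Delta-(1-o(1))\pi^{2}(\Delta-1)/n^{2}$. In the bipartite subcubic case announced in the abstract, one relaxes both halves to the weaker $\Theta(\pi^{2}/n^{2})$ estimate, trading the exact constant for a concrete extremal family.

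For the construction I would look for a graph whose adjacency matrix admits an equitable partition into $\Theta(n)$ classes of bounded size, so that the quotient is a tridiagonal matrix whose largest eigenvalue, equal to $\rho(G)$, has a closed form via Willms' formulas. The prototype for $\Delta=3$ is the ladder $L_{k}=P_{k}\square K_{2}$ on $n=2k$ vertices: it is irregular (corners of degree $2$), subcubic and bipartite, and partitioning vertices by rung produces a $k\times k$ tridiagonal matrix with $1$'s on the main and both off-diagonals, of dominant eigenvalue $1+2\cos(\pi/(k+1))=3-\Theta(\pi^{2}/n^{2})$. For general $\Delta$, the analogue is two copies of $K_{\Delta+1}$ (each with one edge removed to create attachment sites) joined by a long ``$\Delta$-regular corridor''; the resulting quotient matrix is tridiagonal up to a few perturbed end-rows, and Willms' exact diagonalisation is what would yield the sharp $\pi^{2}(\Delta-1)$ constant.

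For the upper bound, let $x$ be the unit Perron eigenvector of $G$. The identity
\begin{equation*}
(\Delta-\rho)\sum_{v}x_{v}^{2}=\sum_{v}(\Delta-d_{v})x_{v}^{2}+\sum_{uv\in E(G)}(x_{u}-x_{v})^{2}
\end{equation*}
furnishes a Dirichlet form on $G$, and the hypothesis supplies a vertex $v_{0}$ with $\Delta-d_{v_{0}}\ge 1$, forcing the right-hand side to be at least $x_{v_{0}}^{2}$ on its own. I would then decompose $G$ along a BFS path of length $D\le n-1$ starting at $v_{0}$ and apply a discrete Poincar\'{e}/Wirtinger inequality using the sharp sine eigenfunction on that path (eigenvalue $2-2\cos(\pi/n)\sim\pi^{2}/n^{2}$), combined with $\|x\|_{2}=1$ and a $(\Delta-1)$-weighted edge count coming from the locally $\Delta$-regular structure, to recover the factor $\pi^{2}(\Delta-1)/n^{2}$.

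The main obstacle is precisely this last step: pinning down the constant $\pi^{2}$, rather than the weaker $\Omega(1/n^{2})$ estimate already supplied by Theorem~\ref{th5}, requires proving that the Perron eigenvector is essentially sine-shaped along a longest path of $G$, which is effectively the same problem as classifying the extremal graph. This is why the bipartite subcubic restriction is so useful: it lets one compare $\rho(G)$ against a single concrete tridiagonal matrix governed by Willms' formulas sharply enough to yield the $\Theta(\pi^{2}/n^{2})$-asymptotic claimed in the abstract. Proving the full conjecture $\lim n^{2}(\Delta-\rho(n,\Delta))/(\Delta-1)=\pi^{2}$ for arbitrary $\Delta$ seems to require either a compactness argument identifying a continuous limiting eigenvalue problem on an interval, or a direct graph-theoretic classification of the asymptotically extremal graphs, neither of which is delivered by the tridiagonal toolkit alone.
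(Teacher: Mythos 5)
The statement you were asked to address is not a theorem of this paper but a conjecture of Liu, Shen and Wang which the paper quotes only in order to refute it; there is no proof of it in the paper, and no proof can exist because the statement is false. The paper records that Liu (arXiv:2203.10245) proved $\lim_{n\to\infty}n^{2}(3-\rho(n,3))=\pi^{2}/2$, whereas the conjecture with $\Delta=3$ demands the value $2\pi^{2}$. Independently, the paper's own Theorem \ref{th1} together with the inclusion $\mathcal{B}(n,3)\subset\mathcal{F}(n,3)$ gives $\limsup_{n\to\infty}n^{2}(3-\rho(n,3))\leq\lim_{n\to\infty}n^{2}(3-\lambda(n,3))=\pi^{2}<2\pi^{2}$. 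Consequently your overall plan cannot be completed: the ``matching universal upper bound'' $\rho(G)\leq\Delta-(1-o(1))\pi^{2}(\Delta-1)/n^{2}$ announced in your first paragraph is simply not true for $\Delta=3$; the extremal path-like subcubic graphs (which are non-bipartite) violate it by a factor of $4$. Your closing paragraph correctly senses that the constant cannot be pinned down by the Dirichlet-form argument, but the reason is that the conjectured constant is wrong, not that the method is too weak.

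Two smaller points. First, your candidate construction does not even reach the conjectured constant: the ladder $P_{k}\square K_{2}$ on $n=2k$ vertices has $\rho=1+2\cos(\pi/(k+1))=3-(1+o(1))\,4\pi^{2}/n^{2}$, so it only certifies $\limsup_{n\to\infty}n^{2}(3-\rho(n,3))\leq 4\pi^{2}$, which is twice the conjectured $2\pi^{2}$ and eight times the true value $\pi^{2}/2$; even within the bipartite class the graph $B_{n}$ of Lemma \ref{lem5} does strictly better, achieving $3-\rho\sim\pi^{2}/n^{2}$. Second, the role of Willms' tridiagonal eigenvalues in this paper is confined to the bipartite subcubic setting (Lemma \ref{lem4}, Proposition \ref{pro1} and Lemma \ref{lem5}), where the quantity being estimated is $\lambda(n,3)$ rather than $\rho(n,3)$. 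If you want a correct statement to attack with the toolkit you describe, Theorem \ref{th1} is the one this paper actually establishes, and your Dirichlet-form identity plus the tridiagonal comparison is indeed the right skeleton for that proof.
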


It is obvious that the conjecture holds for $\Delta=2$, since the path $P_{n}$ is the only graph in $\mathcal{F}(n,2)$, and its spectral radius is $2\cos(\frac{\pi}{n+1})$.
Very recently, the conjecture has been disproved by Liu \cite{Liu2022}. For subcubic graphs, it was proved that $\lim_{n\rightarrow \infty}n^{2}(3-\rho(n,3))=\pi^{2}/2$.
Moreover, the extremal graph with maximum spectral radius is path-like, and it is always non-bipartite.
Hence it is very interesting to consider the maximum spectral radius over irregular bipartite graphs.
The asymptotic value of the maximum spectral radius for subcubic bipartite graphs is determined.
Let us denote by $\mathcal{B}(n,\Delta)$ the set of all connected irregular bipartite graphs on $n$ vertices with maximum degree $\Delta$.
Thus, $\mathcal{B}(n,3)$ means the set of all connected subcubic bipartite graphs with $n$ vertices.
Let $\lambda(n,\Delta)$ be the maximum spectral radius among all graphs in $\mathcal{B}(n,\Delta)$, that is,
$$\lambda(n,\Delta)=\max\{\rho(G): G\in \mathcal{B}(n,\Delta)\}.$$
The first main result of the paper presents the limit of $n^{2}(\Delta-\lambda(n,\Delta))$ for $\Delta=3$.

\begin{theorem}\label{th1}
Let $\lambda=\lambda(n,3)$ be the maximum spectral radius among all graphs in $\mathcal{B}(n,3)$. Then
$$\lim_{n\to \infty}n^{2}(3-\lambda)=\pi^{2}.$$
\end{theorem}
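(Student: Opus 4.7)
The plan is to prove matching upper and lower bounds showing $3 - \lambda(n,3) = \pi^2/n^2 + o(1/n^2)$.

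\textbf{Lower bound by construction.} First I would seek an explicit sequence of graphs $G_n \in \mathcal{B}(n,3)$ satisfying $\rho(G_n) \geq 3 - \pi^2/n^2 + o(1/n^2)$. The target reference point is that a symmetric tridiagonal matrix of order $n$ with all diagonal entries equal to $1$ and all off-diagonal entries equal to $1$ has largest eigenvalue $1 + 2\cos(\pi/(n+1)) = 3 - \pi^2/n^2 + o(1/n^2)$. If one finds a bipartite subcubic irregular graph $G_n$ whose equitable partition yields a tridiagonal quotient close to this form, Willms's theorem (or direct computation) gives the desired bound. The naive candidate of the ladder $P_m \square K_2$ on $n = 2m$ vertices falls short, producing only a size-$m$ tridiagonal quotient and the inferior rate $4\pi^2/n^2$; so one needs a graph whose ``effective length'' is roughly $n$ rather than $n/2$, achieved for instance by a path-like gadget-chain whose orbits under its natural automorphism group give singletons.

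\textbf{Upper bound by distance partition.} Given any $G \in \mathcal{B}(n,3)$ with positive Perron vector $x$ and Perron eigenvalue $\rho = \rho(G)$, pick a vertex $v$ of minimum degree (so $\deg(v) \leq 2$, which exists by irregularity) and form the distance partition $V_i = \{u : d(v,u) = i\}$ for $i = 0,1,\ldots,D$. Since $G$ is bipartite, all edges lie between consecutive levels; since $G$ is subcubic, each $u \in V_i$ has at most $3$ neighbors in $V_{i-1} \cup V_{i+1}$, and at least one in $V_{i-1}$ when $i \geq 1$. Averaging the Perron equation over each level yields quantities $y_i$ obeying a tridiagonal inequality of the form $\rho y_i \leq \alpha_i y_{i-1} + \beta_i y_{i+1}$, which implies $\rho(G) \leq \rho(T)$ for a tridiagonal matrix $T$ of order $D+1$ whose entries are determined by the level sizes and inter-level edge counts. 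Applying Willms's eigenvalue formula to $T$, together with the constraint $\sum |V_i| = n$ and the additional structural restrictions from bipartiteness and irregularity, should give $\rho(T) \leq 3 - \pi^2/n^2 + o(1/n^2)$.

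\textbf{Main obstacle.} The core difficulty is recovering the sharp constant $\pi^2$, as opposed to the $4\pi^2$ that emerges from careless arguments. On the constructive side, one must identify a graph family whose equitable-partition quotient has the doubled length. On the analytic side, Willms's theorem must be applied in a setting that combines the cardinality constraint with the structural restrictions from bipartiteness and subcubic irregularity, since only then does the bound $3 - \pi^2/n^2$ emerge rather than the weaker $3 - 4\pi^2/n^2$ coming from a partition in which every level carries up to two vertices. This is where I expect to spend most of the work, and where the cited Willms tridiagonal-matrix machinery is essential.
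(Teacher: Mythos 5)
Your plan correctly identifies the numerical target (why the answer is $\pi^{2}$ rather than the $4\pi^{2}$ a naive ladder computation gives) and correctly points to Willms's tridiagonal eigenvalues as the relevant tool, but both halves have genuine gaps. For the constructive half you never actually exhibit the extremal family, and the mechanism you propose for recovering effective length $n$ (a gadget chain whose orbits are singletons) cannot work: a connected graph whose distance/orbit quotient is a genuine tridiagonal matrix of order $n$ on singleton classes is forced to be a path, hence has maximum degree $2$. The paper's extremal graph $B_{n}$ is a ladder-like graph whose symmetry quotient has order only $k=n/2$; the doubling of the effective length comes instead from the \emph{boundary conditions} of the resulting tridiagonal matrix $M_{k}$ (a free, Neumann-type end at the saturated side and a defect only at the unsaturated end), whose least eigenvalue is $4\sin^{2}\bigl(\pi/(4k+2)\bigr)\approx\pi^{2}/n^{2}$ by Willms's formula --- i.e.\ the spectrum of a path of length $2k=n$, not $k$. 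Without this observation your construction step does not produce the constant $\pi^{2}$.

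The more serious gap is the upper bound $\rho(G)\leq 3-\pi^{2}/n^{2}+o(1/n^{2})$ for \emph{every} $G\in\mathcal{B}(n,3)$. Your distance-partition sketch asserts that level-averaging the Perron equation yields $\rho(G)\leq\rho(T)$ for a tridiagonal $T$ of order $D+1$ and that Willms plus the constraint $\sum_i|V_i|=n$ ``should give'' the bound; but neither the comparison inequality (averaging over a non-equitable partition does not in general bound $\rho(G)$ by the quotient's spectral radius in the needed direction) nor the optimization of $\rho(T)$ over all admissible level profiles is carried out, and the latter is essentially the entire difficulty of the theorem --- note that the analogous constant for non-bipartite subcubic graphs is $\pi^{2}/2$, so any valid argument must exploit bipartiteness in a sharp, non-generic way. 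The paper does not prove this direction structurally at all: it invokes the result of \cite{Xue2022+} that $B_{n}$ is the unique maximizer in $\mathcal{B}(n,3)$, which reduces the theorem to two-sided Rayleigh-quotient estimates for the single graph $B_{n}$ (writing $3I-A=L+\Lambda$, folding by symmetry onto $M_{k}$ for the lower bound on $3-\rho$, and plugging in an explicit sine test vector for the upper bound). As written, your proposal is a program rather than a proof, and its central analytic step would need to be replaced either by that external classification or by a substantially more careful argument.
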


Note that $\mathcal{B}(n,3)\subset\mathcal{F}(n,3)$.
This implies that $\lambda(n,3)<\rho(n,3)$. If we take $\Delta=3$,
then Conjecture \ref{conj1} means that
$$\lim_{n\rightarrow \infty}n^{2}(3-\rho(n,3))=2\pi^{2}.$$
However, according to Theorem \ref{th1}, it follows that
$$\lim_{n\rightarrow \infty}n^{2}(3-\rho(n,3))\leq \lim_{n\rightarrow \infty}n^{2}(3-\lambda(n,3))=\pi^{2},$$
which provides a counterexample to Conjecture \ref{conj1}.
A simple modification of Theorem \ref{th1} leads to the following consequence,
which establishes an asymptotic value of the maximum spectral radius for subcubic bipartite graphs.

\begin{theorem}
  The maximum spectral radius of subcubic bipartite graphs on $n$ vertices is $3-\varTheta(\frac{\pi^{2}}{n^{2}})$.
\end{theorem}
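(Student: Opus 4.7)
The plan is to derive this statement as a direct corollary of Theorem \ref{th1}. Recall that $3-\lambda(n,3)=\Theta(\pi^{2}/n^{2})$ means there exist positive constants $c_{1},c_{2}$ and an integer $n_{0}$ such that $c_{1}\pi^{2}/n^{2}\le 3-\lambda(n,3)\le c_{2}\pi^{2}/n^{2}$ for all $n\ge n_{0}$, so the task is simply to exhibit such constants.

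First I would invoke Theorem \ref{th1}, which asserts that the sequence $a_{n}:=n^{2}(3-\lambda(n,3))$ converges to the strictly positive limit $\pi^{2}$. Convergence to a positive number, applied with tolerance $\varepsilon=\tfrac{1}{2}\pi^{2}$, furnishes an index $n_{0}$ such that $\tfrac{1}{2}\pi^{2}\le a_{n}\le\tfrac{3}{2}\pi^{2}$ whenever $n\ge n_{0}$. Dividing through by $n^{2}$ gives
$$\tfrac{1}{2}\cdot\tfrac{\pi^{2}}{n^{2}}\;\le\;3-\lambda(n,3)\;\le\;\tfrac{3}{2}\cdot\tfrac{\pi^{2}}{n^{2}},$$
which is exactly the desired $\Theta$-bound for all sufficiently large $n$, with constants $c_{1}=1/2$ and $c_{2}=3/2$.

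The second step is to absorb the finitely many small values of $n$ into the constants. For each $n$ with $\mathcal{B}(n,3)\ne\emptyset$, any graph $G\in\mathcal{B}(n,3)$ is irregular with $\Delta(G)=3$, so the classical bound $\rho(G)<\Delta(G)$ yields $3-\lambda(n,3)>0$. Since only finitely many indices $n<n_{0}$ need to be considered, one can take the extremes of $n^{2}(3-\lambda(n,3))$ over this finite set and, if necessary, enlarge $c_{2}$ and shrink $c_{1}$ so that the $\Theta$-inequalities hold uniformly in $n$. A brief check that $\mathcal{B}(n,3)$ is indeed non-empty for all large $n$ is needed, but this is immediate from the explicit subcubic bipartite constructions used in establishing Theorem \ref{th1}.

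There is no genuine obstacle in the argument; the substantive content lies entirely in Theorem \ref{th1}, and the present statement is a straightforward repackaging of that limit into $\Theta$-notation.
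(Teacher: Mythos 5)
Your proposal is correct and matches the paper's intent: the paper derives this statement as a ``simple modification'' of Theorem \ref{th1}, which is precisely the standard limit-to-$\varTheta$ conversion you carry out (a positive limit gives two-sided constant-factor bounds for large $n$, and the finitely many remaining cases are absorbed using $3-\lambda(n,3)>0$ from irregularity). No gap; the substantive content is indeed all in Theorem \ref{th1}.
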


On the other hand, we focus on the maximum spectral radius of irregular bipartite graphs with large maximum degree.
Define an irregular bipartite graph $H_{n,\Delta}$ as follows.

\medskip

\noindent $\bullet$ For $2\Delta>n$, $H_{n,\Delta}$ is isomorphic to the complete bipartite graph $K_{\Delta,n-\Delta}$.\\
\noindent $\bullet$ For $2\Delta=n$, $H_{n,\Delta}$ is obtained from $K_{\Delta,\Delta}$ by deleting an edge.\\
\noindent $\bullet$ For $2\Delta=n-1$, $H_{n,\Delta}$ is obtained from $K_{\Delta,\Delta}$ by deleting an edge, and then adding a new vertex and joining it to a vertex of degree less than $\Delta$.

\medskip

When the maximum degree is at least $\lfloor n/2 \rfloor$, the maximum spectral radius is completely determined.

\begin{theorem}\label{th2}
  Let $G$ be an irregular bipartite graph on $n$ vertices with maximum degree $\Delta$.
  If $\Delta\geq \lfloor n/2 \rfloor$, then
  $$\rho(G)\leq \rho(H_{n,\Delta}),$$
  with equality if and only if $G\cong H_{n,\Delta}$.
\end{theorem}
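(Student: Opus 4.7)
The proof proceeds by case analysis on $2\Delta$ versus $n$, mirroring the three-part definition of $H_{n,\Delta}$. Let $G$ be a connected irregular bipartite graph on $n$ vertices with maximum degree $\Delta \geq \lfloor n/2 \rfloor$ and bipartition $(X, Y)$, with $|X|=a \leq b = |Y|$; since any vertex of degree $\Delta$ has all its neighbours on the opposite side, $b \geq \Delta$ and $a \leq n - \Delta$. Two standard facts will be used throughout: (I) for a bipartite graph with part-wise maximum degrees $d_X, d_Y$, $\rho(G) \leq \sqrt{d_X d_Y}$, via $\|B\|_2 \leq \sqrt{\|B\|_1 \|B\|_\infty}$ applied to the biadjacency matrix $B$; and (II) adding an edge to a connected graph strictly increases its spectral radius.

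When $2\Delta > n$, $a \leq n - \Delta < \Delta \leq b$ and $\rho(G) \leq \rho(K_{a,b}) = \sqrt{(n-b)b}$; since $b\mapsto(n-b)b$ is strictly decreasing on $[n/2,n)$, $\rho(G) \leq \sqrt{\Delta(n-\Delta)} = \rho(H_{n,\Delta})$, with equality if and only if $G = K_{n-\Delta,\Delta}$. When $2\Delta = n$ and $a = b = \Delta$, $G$ is a proper subgraph of $K_{\Delta,\Delta}$, so $G \subseteq K_{\Delta,\Delta} - e$ for some edge $e$, and (II) gives $\rho(G) \leq \rho(K_{\Delta,\Delta} - e) = \rho(H_{n,\Delta})$ with equality if and only if $G \cong H_{n,\Delta}$. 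When $2\Delta = n$ with $a < b$ (so $a \leq \Delta - 1$), or when $2\Delta = n - 1$ with $a < \Delta$, tool (I) yields $\rho(G) \leq \sqrt{\Delta(\Delta - 1)}$ since the $Y$-side maximum degree is at most $a$. A short equitable-partition computation gives $\rho(H_{n,\Delta})^2 = \tfrac12\bigl((\Delta-1)(\Delta+1)+(\Delta-1)\sqrt{(\Delta-1)(\Delta+3)}\bigr)$ in the former sub-case and $\rho(H_{n,\Delta})^2 = \tfrac12\bigl(\Delta^2+\sqrt{\Delta^4 - 4(\Delta-1)(2\Delta-1)}\bigr)$ in the latter; the strict inequality $\rho(H_{n,\Delta})^2 > \Delta(\Delta-1)$ reduces to $\Delta+3 > \Delta-1$ and to $(\Delta-1)^3 > 0$ respectively, closing both.

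The only substantive case is $2\Delta = n - 1$ with $a = \Delta$, $b = \Delta + 1$. By (II) I may replace $G$ by an edge-maximal overgraph (same bipartition, max-degree $\Delta$); edge-maximality together with the max-degree constraint forces every $X$-vertex to have degree exactly $\Delta$, for if some $x$ had degree $< \Delta$ then a non-neighbour $y$ of $x$ would need $\deg(y) = \Delta = |X|$, making $y$ adjacent to every vertex of $X$ and contradicting $x \not\sim y$. Thus $G$ arises from $K_{\Delta,\Delta+1}$ by deleting exactly one edge per $X$-vertex, encoded by a function $f\colon X \to Y$; put $m_y = |f^{-1}(y)|$, so $\sum_y m_y = \Delta$ and $m_y \leq \Delta - 1$ (for connectedness). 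The graph $H_{n,\Delta}$ corresponds to the miss-multiset $(\Delta - 1, 1, 0, \ldots, 0)$, and the task reduces to showing this uniquely maximises $\rho$ among admissible multisets. I plan to exploit the automorphism symmetry: the symmetric group on each $f^{-1}(y)$ acts on $G$, so the Perron eigenvector $\phi$ is constant on each $f^{-1}(y)$, with common value $\xi_y$; the two Perron equations combine (using $\rho S = \Delta T$ for $S = \sum_y \phi(y)$, $T = \sum_x \phi(x)$) to give $\xi_y = C/(\rho^2 - m_y)$ with $C > 0$ independent of $y$, whence $\xi_y$ is strictly increasing and $\phi(y)$ strictly decreasing in $m_y$. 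By the Rayleigh principle, any shift of a missing edge from a vertex $y$ with smaller $m_y$ to some $y'$ with larger $m_{y'}$ (subject to $m \leq \Delta - 1$) weakly increases $\rho$; iterating such shifts reduces any admissible multiset to $(\Delta-1,1,0,\ldots,0)$. The chief obstacle will be upgrading ``weakly'' to ``strictly'' when an elementary shift between two $y$-vertices of equal $m_y$ is first-order neutral; I expect to handle this either by arranging the shifting chain to include at least one strictly $\rho$-increasing step (exploiting that $\phi$ cannot simultaneously be the Perron eigenvector of two genuinely different graphs in the family), or, failing that, by a short direct comparison of the finitely many admissible multiset types through their small equitable quotient matrices.
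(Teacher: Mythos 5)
Your argument is sound and reaches the same conclusion, but in the one genuinely hard case it follows a different route from the paper. For $2\Delta\geq n$ and for the unbalanced subcases you use the bound $\rho\leq\sqrt{d_Xd_Y}$ on the biadjacency matrix, whereas the paper uses the edge-count bound $\rho\leq\sqrt{|E|}$ of Bhattacharya--Friedland--Peled (Lemma \ref{lem1}); both force $|X|=\Delta$, $|Y|=\Delta+1$ in the remaining case, and your explicit quotient-matrix values of $\rho(H_{n,\Delta})^2$ (which I checked) could be replaced by the paper's cheaper observation that $K_{\Delta,\Delta-1}$ is a proper subgraph of $H_{n,\Delta}$, so $\rho(H_{n,\Delta})>\sqrt{\Delta(\Delta-1)}$. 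In the main case the paper works directly with the extremal graph: it shows the set $Y^{*}$ of unsaturated $Y$-vertices has size exactly two, that these two vertices have disjoint neighbourhoods covering $X$, and that one of them is pendant, each step being a single application of edge addition or of the rotation lemma (Lemma \ref{lem3}). You instead first saturate all of $X$ by edge-maximality, reducing to the clean family $K_{\Delta,\Delta+1}$ minus one edge per $X$-vertex encoded by the miss-multiplicities $(m_y)$, derive the closed form $\xi_y=C/(\rho^2-m_y)$ with $C=\rho S-T=\rho S(1-1/\Delta)>0$ (correct, using $\rho S=\Delta T$ and $\rho^2>\Delta$ since $K_{1,\Delta}\subsetneq G$), and compress the multiset to $(\Delta-1,1,0,\ldots,0)$ by rotations. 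Your approach is more computational but yields a global monotonicity statement over the whole family, which is more information than the theorem requires; the paper's is shorter because it only ever compares the putative extremal graph with a single perturbation of itself. One remark: the ``weakly versus strictly'' obstacle you anticipate does not actually arise, because the rotation lemma in the form the paper uses (Lemma \ref{lem3}) already gives \emph{strict} increase of $\rho$ under the hypothesis $\mathtt{x}(v)\geq\mathtt{x}(u)$, including the case of equality of eigenvector entries; so every shift in your chain, including shifts between two $Y$-vertices with equal $m_y$, is strictly increasing, and your fallback arguments are unnecessary. Do make sure to record that each intermediate graph stays connected (equivalently $m_y\leq\Delta-1$ throughout) and irregular (automatic, since $\Delta^2\neq(\Delta+1)\Delta$ forces an unsaturated vertex in $Y$), as Lemma \ref{lem3} needs connectivity.
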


Note that the above theorem also means that $\lambda(n,\Delta)=\rho(H_{n,\Delta})$ if $\Delta\geq \lfloor n/2 \rfloor$.
Furthermore, for general $n$ and $\Delta$, we provide a lower bound for $\Delta-\lambda(n,\Delta)$,
which is also an upper bound for $\lambda(n,\Delta)$.

\begin{theorem}\label{th3}
  Let $\lambda(n,\Delta)$ be the maximum spectral radius among all the graphs in $\mathcal{B}(n,\Delta)$. Then
  $$\Delta-\lambda(n,\Delta)>\frac{2\Delta}{n(4n+\Delta-4)}.$$
  \end{theorem}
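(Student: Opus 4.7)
The plan is to combine the standard Perron-eigenvector identity
$$(\Delta-\rho(G))\|x\|_2^2 \;=\; \sum_{v\in V(G)}(\Delta-d(v))\,x_v^2 \;+\; \sum_{uv\in E(G)}(x_u-x_v)^2$$
with a Cauchy--Schwarz estimate along a geodesic to an irregular vertex, and then to sharpen the output by one ``squaring step'' that is available precisely because $G$ is bipartite.

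Let $x$ be the positive Perron eigenvector of an irregular $G\in\mathcal{B}(n,\Delta)$, normalized so that $x_{u_0}=\|x\|_\infty=1$ at some vertex $u_0$. If $d(u_0)<\Delta$, then $\rho(G)=\sum_{v\sim u_0}x_v\leq d(u_0)\leq \Delta-1$, so $\Delta-\rho(G)\geq 1$ and the conclusion is automatic; thus I assume $d(u_0)=\Delta$. By irregularity, pick a vertex $w$ with $d(w)\leq \Delta-1$ at minimum graph distance $k$ from $u_0$ and fix a shortest $u_0$--$w$ path $P$. Along $P$, Cauchy--Schwarz gives $\sum_{e\in P}(x_u-x_v)^2\geq (1-x_w)^2/k$, while the deficiency at $w$ contributes at least $x_w^2$. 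Minimising $x_w^2+(1-x_w)^2/k$ in $x_w\in[0,1]$ yields $1/(k+1)$, and with the easy estimates $\|x\|_2^2\leq n$ and $k\leq n-1$ this baseline argument already delivers $\Delta-\rho\geq 1/n^2$, matching the theorem up to the leading factor $\Delta/2$.

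To recover the missing factor $\Delta$, I would repeat the manipulation on $A(G)^2$, whose top eigenvalue is $\rho^2$ and which decouples across the two parts precisely because $G$ is bipartite. This produces the enhanced identity
$$(\Delta^2-\rho^2)\|x\|_2^2 \;=\; \Delta\sum_v(\Delta-d(v))\,x_v^2 \;+\; \sum_v(\Delta-d(v))\!\sum_{u\sim v}x_u^2 \;+\; \sum_{\{u,v\}}(A^2)_{uv}(x_u-x_v)^2,$$
where the last sum is over unordered pairs $\{u,v\}$ lying in a common bipartition class. The bipartite hypothesis is what produces the leading $\Delta$ in front of the deficiency term; a path argument in the distance-two graph (using $(A^2)_{uv}\geq 1$ whenever $u,v$ share a neighbour) handles the third summand, and the identity $\Delta-\rho=(\Delta^2-\rho^2)/(\Delta+\rho)\geq(\Delta^2-\rho^2)/(2\Delta)$ converts the result back to a lower bound on $\Delta-\rho$.

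The hard part will be nailing down the precise constant $4n+\Delta-4$ in the denominator. I expect this to require a careful case split on whether $w$ lies in the same bipartition class as $u_0$ (in which case the squared-graph path has length $\lceil k/2\rceil$) or in the opposite class (in which case the mixed term $\sum_v(\Delta-d(v))\sum_{u\sim v}x_u^2$ becomes the decisive contribution, using $\sum_{u\sim v}x_u^2\geq 1$ whenever $u_0\sim v$), together with the sharp bounds $\|x\|_2^2\leq n$ and $k\leq n-1$ at the very end.
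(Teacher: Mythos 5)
Your first paragraph and the ``baseline'' argument coincide with the skeleton of the paper's proof: the identity $\Delta-\rho=\sum_{uv\in E}(x_u-x_v)^2+\sum_v(\Delta-d(v))x_v^2$, Cauchy--Schwarz along a geodesic, and the minimization $a(p-q)^2+bq^2\ge abp^2/(a+b)$ (the paper quotes this from Shi) giving roughly $\Delta-\rho\gtrsim 1/(n(k+1))$ with $k$ the geodesic length. The gap is in how you propose to recover the factor $\Delta$. Your $A^2$ identity is algebraically correct (the deficiency coefficient at $v$ in $\Delta^2 I-A^2$ is $\Delta(\Delta-d(v))+\sum_{w\sim v}(\Delta-d(w))$), but it does not deliver the factor. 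Run your own scheme: with $x_{u_0}=1$ maximal and $w$ deficient at distance $k$, the distance-two path has $\lceil k/2\rceil$ steps and $(A^2)_{uv}\ge 1$ is all one can say, so Shi's inequality with $a=2/k$, $b=\Delta$ gives $\Delta^2-\rho^2\ge \frac{1}{\|x\|^2}\cdot\frac{2\Delta}{2+k\Delta}$, and after dividing by $\Delta+\rho\le 2\Delta$ you get $\Delta-\rho\ge \frac{1}{n(2+k\Delta)}$. The extra $\Delta$ in the deficiency term is exactly eaten by the final division by $2\Delta$, and the bound is in fact \emph{worse} than your baseline $\frac{1}{n(k+1)}$. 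The mixed term $\sum_v(\Delta-d(v))\sum_{u\sim v}x_u^2$ only rescues you if a deficient vertex happens to be adjacent to $u_0$, which need not occur. So the squaring step is a dead end as described.

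What actually produces the factor $\Delta$ in the paper is a combinatorial bound on $k$ itself, and this is where the hypothesis that the graph is \emph{extremal} (it realizes $\lambda(n,\Delta)$, which your argument never uses) is essential. The paper first shows (Lemma \ref{lem6}) that the extremal graph $\Gamma$ has at least two unsaturated vertices and that its unsaturated vertices on opposite sides induce a complete bipartite graph; then (Lemma \ref{lem7}), via edge-switching (Lemma \ref{lem3}) along a shortest path between the maximum-entry vertex $\hat w$ and the minimum-entry vertex $\check w$, it shows that such a path contains at most a bounded number of unsaturated vertices, so the pairwise-disjoint neighbourhoods of the remaining degree-$\Delta$ path vertices force $n\ge k\Delta$, i.e.\ $\mathrm{dist}(\hat w,\check w)\le 2(n-1)/\Delta$. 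Substituting $2k+1\le (4n+\Delta-4)/\Delta$ into the Shi-type bound $\frac{2}{n(2k+1)}$ (which also uses $|X^*\cup Y^*|\ge 2$ to take $b=2$, and $\mathtt{x}(\hat w)^2>1/n$) yields exactly $\frac{2\Delta}{n(4n+\Delta-4)}$. Without some substitute for this distance lemma your plan cannot reach the stated denominator, and as written the second half of your proposal is a programme rather than a proof.
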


  The rest of the paper is organized as follows. In Section 2, we prove Theorem \ref{th1} by utilizing the eigenvalue of certain tridiagonal matrices.
  The proof of Theorem \ref{th2} is presented in Section 3. In Section 4,
  we establish some structural properties of the irregular bipartite graph with maximum spectral radius, and then give the proof of Theorem \ref{th3}.
  In the final section, some additional remarks are provided.

\section{Subcubic bipartite graphs}

The eigenvalues and eigenvectors of a certain tridiagonal matrix were discussed in \cite{Willms2008}.
Let us consider the tridiagonal matrix
\begin{equation}
  A=\begin{bmatrix}
    -\alpha+b & c_{1} & \\
    a_{1} & b & c_{2} \\
      &  a_{2}  & \ddots  & \ddots \\
      &    &   \ddots  & b  & c_{n-1}\\
      &  &   &  a_{n-1}  &  -\beta+b
  \end{bmatrix}
\end{equation}
with the restriction $\sqrt{a_{i}c_{i}}=d\neq 0$ for $1\leq i\leq n-1$.
All variables appearing in the matrix are complex.
For some special cases, Willms \cite{Willms2008} presented the eigenvalues and corresponding eigenvectors.

\begin{lemma}{\rm(\cite{Willms2008})}\label{lem4}
  If $\alpha=d$ and $\beta=0$, then the eigenvalues of $A$ are
  $$\lambda_{i}=b+2d\cos\left(\frac{2i\pi}{2n+1}\right),$$
  where $1\leq i\leq n$.
\end{lemma}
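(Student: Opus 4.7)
My plan is to first exploit a diagonal similarity to eliminate the asymmetric off-diagonals, then solve the reduced eigenvalue problem by fitting a trigonometric ansatz to the two boundary equations.

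First I would construct a diagonal matrix $D=\operatorname{diag}(\delta_1,\ldots,\delta_n)$ such that $D^{-1}AD$ has every sub- and super-diagonal entry equal to $d$. Setting $r_i=\delta_{i+1}/\delta_i$, the off-diagonal entries of $D^{-1}AD$ at positions $(i,i{+}1)$ and $(i{+}1,i)$ become $c_i r_i$ and $a_i/r_i$; requiring both to equal $d$ forces $r_i^2=c_i/a_i$, which is consistent precisely because $d^2=a_ic_i$. Since this similarity preserves both the main diagonal and the spectrum, substituting $\alpha=d$ and $\beta=0$ reduces the problem to computing the eigenvalues of $T=bI+dM$, where $M$ is the $n\times n$ tridiagonal matrix with entry $-1$ in position $(1,1)$, zeros in the remaining diagonal positions, and $1$ on every sub- and super-diagonal.

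Next I would write the eigenvalue equation $Mx=\mu x$ as a three-term recurrence. Parametrizing $\mu=2\cos\theta$, the interior equations read $x_{j-1}+x_{j+1}=2\cos\theta\cdot x_j$ for $2\le j\le n-1$, while the two endpoint equations translate, via ghost nodes, into $x_0=-x_1$ and $x_{n+1}=0$. The ansatz $x_j=\sin(j\theta+\phi)$ satisfies the recurrence automatically; the left-boundary condition $\sin\phi+\sin(\theta+\phi)=0$ simplifies by the sum-to-product identity to $\phi\equiv-\theta/2\pmod{\pi}$, and substituting this into $x_{n+1}=0$ yields $(2n+1)\theta/2\equiv 0\pmod{\pi}$. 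Hence $\theta_k=2k\pi/(2n+1)$ for $k=1,\ldots,n$; these $n$ angles lie in $(0,\pi)$, where cosine is injective, so they produce $n$ distinct eigenvalues $\lambda_k=b+2d\cos\!\bigl(2k\pi/(2n+1)\bigr)$, which by a dimension count must exhaust the spectrum of $A$.

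The main obstacle I anticipate is deriving the correct ghost-node condition at the left endpoint: the $-\alpha+b=-d+b$ entry in position $(1,1)$ is precisely what replaces the usual Dirichlet condition $x_0=0$ (which would give the symmetric-tridiagonal spectrum $2\cos(k\pi/(n+1))$) by the Robin-type condition $x_0=-x_1$, and it is this twist that produces the odd denominator $2n+1$ rather than $n+1$. A secondary concern, readily dispatched, is confirming that the ansatz yields genuinely nonzero eigenvectors and $n$ distinct eigenvalues; the injectivity of cosine on $(0,\pi)$ together with the explicit form of $x_j$ handles both points at once. One small subtlety worth noting is that, because the entries are complex, $\theta$ may need to be taken complex as well, but the algebraic manipulations above remain valid verbatim.
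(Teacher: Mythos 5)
Your derivation is correct. Note, however, that the paper does not prove this lemma at all: it is quoted verbatim from Willms's paper on tridiagonal eigenvalue problems, so your argument is a self-contained replacement for a citation rather than an alternative to an in-text proof. Your route --- diagonal similarity to reduce to $bI+dM$ with $M$ a fixed real symmetric tridiagonal matrix, then the ansatz $x_j=\sin(j\theta+\phi)$ with ghost-node boundary conditions $x_0=-x_1$ and $x_{n+1}=0$ --- is the standard one and is essentially the method Willms uses (he treats general $\alpha,\beta$ by the same ansatz and specializes). Two small points. First, a sign slip: requiring $c_ir_i=a_i/r_i=d$ with $r_i=\delta_{i+1}/\delta_i$ gives $r_i^{2}=a_i/c_i$, not $c_i/a_i$; this is immaterial since solvability in either form is exactly the hypothesis $d^{2}=a_ic_i$. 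Second, your closing worry about complex $\theta$ is unnecessary: after the similarity the matrix $M$ (entries $-1$ at position $(1,1)$, $0$ elsewhere on the diagonal, $1$ on the off-diagonals) is a fixed real symmetric matrix independent of the complex data, so its eigenvalues $2\cos\bigl(2k\pi/(2n+1)\bigr)$ are real and the complex parameters $b,d$ enter only through the affine map $\mu\mapsto b+d\mu$. You might also record explicitly that the left boundary equation $2\sin(\theta/2+\phi)\cos(\theta/2)=0$ has the spurious branch $\cos(\theta/2)=0$, which is harmlessly excluded once the $n$ angles $\theta_k=2k\pi/(2n+1)\in(0,\pi)$ are shown to yield $n$ distinct eigenvalues with nonzero eigenvectors $x_j=\sin\bigl((2j-1)\theta_k/2\bigr)$.
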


We remark that the above lemma is one of the partial results in \cite{Willms2008}.
When $\alpha$ and $\beta$ take the other values, the eigenvalues and eigenvectors of $A$ were also provided.
Let us define a special tridiagonal matrix of order $n$:
\begin{equation}\label{eq3}
  M_{n}=\begin{bmatrix}
    1 & -1 & \\
    -1 & 2 & -1 \\
      &  -1  & \ddots  & \ddots \\
      &    &   \ddots  & 2  & -1\\
      &  &   &  -1  &  2
  \end{bmatrix}.
\end{equation}
Obviously, the matrix $M_{n}$ is obtained from $A$ by setting $\alpha=1$, $\beta=0$, $b=2$ and $a_{i}=c_{i}=-1$ for $1\leq i\leq n-1$.
Using Lemma \ref{lem4}, the eigenvalues and eigenvectors of $M_{n}$ can be determined directly.
In particular, we display its least eigenvalue in the following result.

\begin{proposition}\label{pro1}
  For a given positive integer $n$, the least eigenvalue of $M_{n}$ is $4\sin^{2}\left(\frac{\pi}{4n+2}\right)$.
\end{proposition}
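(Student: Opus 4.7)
The plan is to apply Lemma \ref{lem4} directly to the matrix $M_{n}$ after identifying the appropriate parameters, and then to minimize the resulting closed-form expression over the index $i$.

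First I would observe that $M_{n}$ is exactly the matrix $A$ with $b=2$, $a_{i}=c_{i}=-1$ for $1\le i\le n-1$, $\alpha=1$ and $\beta=0$. The restriction in the definition of $A$ requires $\sqrt{a_{i}c_{i}}=d$ to be a nonzero constant, and here $\sqrt{(-1)(-1)}=1$, so we may take $d=1$. In particular $\alpha=d=1$ and $\beta=0$, which are precisely the hypotheses of Lemma \ref{lem4}. Therefore the eigenvalues of $M_{n}$ are
\begin{equation*}
\lambda_{i}=2+2\cos\!\left(\frac{2i\pi}{2n+1}\right), \qquad 1\le i\le n.
\end{equation*}

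Next I would locate the minimum among these $n$ values. Since $\cos\theta$ is strictly decreasing on $[0,\pi]$ and the arguments $2i\pi/(2n+1)$ lie in $(0,\pi)$ for all $1\le i\le n$, the value $\lambda_{i}$ is minimized precisely when $i=n$, where the argument $2n\pi/(2n+1)=\pi-\pi/(2n+1)$ is closest to $\pi$. Hence
\begin{equation*}
\min_{1\le i\le n}\lambda_{i}=2+2\cos\!\left(\pi-\frac{\pi}{2n+1}\right)=2-2\cos\!\left(\frac{\pi}{2n+1}\right).
\end{equation*}

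Finally I would rewrite the right-hand side using the half-angle identity $1-\cos\theta=2\sin^{2}(\theta/2)$ with $\theta=\pi/(2n+1)$, which immediately yields $4\sin^{2}(\pi/(4n+2))$ as claimed. There is no genuine obstacle here: the argument is essentially a direct substitution into Lemma \ref{lem4} followed by a standard trigonometric manipulation. The only point to be careful about is that $\alpha=d$ must match in sign (taking $d=+1$ rather than $d=-1$), and that the minimum is attained at $i=n$ rather than $i=1$, which follows from the monotonicity of cosine on $[0,\pi]$.
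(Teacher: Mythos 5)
Your proposal is correct and follows exactly the route the paper intends: identify $M_{n}$ as the matrix $A$ with $\alpha=1$, $\beta=0$, $b=2$, $a_{i}=c_{i}=-1$ (so $d=1=\alpha$), apply Lemma \ref{lem4} to get the eigenvalues $2+2\cos\bigl(\frac{2i\pi}{2n+1}\bigr)$, take $i=n$, and convert via $1-\cos\theta=2\sin^{2}(\theta/2)$. The paper leaves these details implicit, and your write-up supplies them correctly, including the minor point about choosing $d=+1$ and locating the minimum at $i=n$.
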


Let us introduce an identical equation about the trigonometric function, which will be used in the sequel proof.
A fundamental result for the trigonometric function is
$$\cos^{2}\alpha+\cos^{2}(\frac{\pi}{2}-\alpha)=\sin^{2}\alpha+\sin^{2}(\frac{\pi}{2}-\alpha)=1.$$
Using this fact, for any integer $k\geq 3$, one can see that
\begin{equation}\label{eq6}
\sum_{i=1}^{k-1}\sin^{2}\left(\frac{i}{2k}\pi\right)=\frac{k-1}{2}
\end{equation}
and
\begin{equation}\label{eq7}
  \sum_{i=0}^{k-1}\cos^{2}\left(\frac{2i+1}{4k}\pi\right)=\frac{k}{2}.
\end{equation}

Given an irregular graph with maximum degree $\Delta$, we say that a vertex is unsaturated if its degree is less than $\Delta$.
The irregular graph with the maximum spectral radius cannot have many unsaturated vertices.
In \cite{Xue2022+}, Xue and Liu considered the subcubic bipartite graph with maximum spectral radius,
and showed that the extremal graph contains at most two unsaturated vertices.
Let $G$ be a bipartite graph with bipartition $(X,Y)$. We say that $G$ is balanced if $|X|=|Y|$.
For $n\geq 6$, let us define a series of bipartite graphs $B_{n}$ constructed as follows.

\medskip

\noindent $\bullet$ $B_{6}$ is a balanced bipartite graph obtained from $K_{3,3}$ by deleting an edge.\\
\noindent $\bullet$ If $B_{n}$ is balanced, then $B_{n+1}$ is obtained from $B_{n}$ by adding a vertex and joining it to an unsaturated vertex in one part of $B_{n}$.\\
\noindent $\bullet$ If $B_{n}$ is unbalanced, then $B_{n+1}$ is obtained from $B_{n}$ by adding a vertex and joining it to the unsaturated vertices of $B_{n}$.

\medskip

It was proved that $B_{n}$ is the unique subcubic bipartite graph with maximum spectral radius \cite{Xue2022+}.
The following lemma establishes the lower and upper bounds for the spectral radius of $B_{n}$.

\begin{figure}[t]
  \begin{center}
  \scalebox{0.85}[0.85]
  {\includegraphics{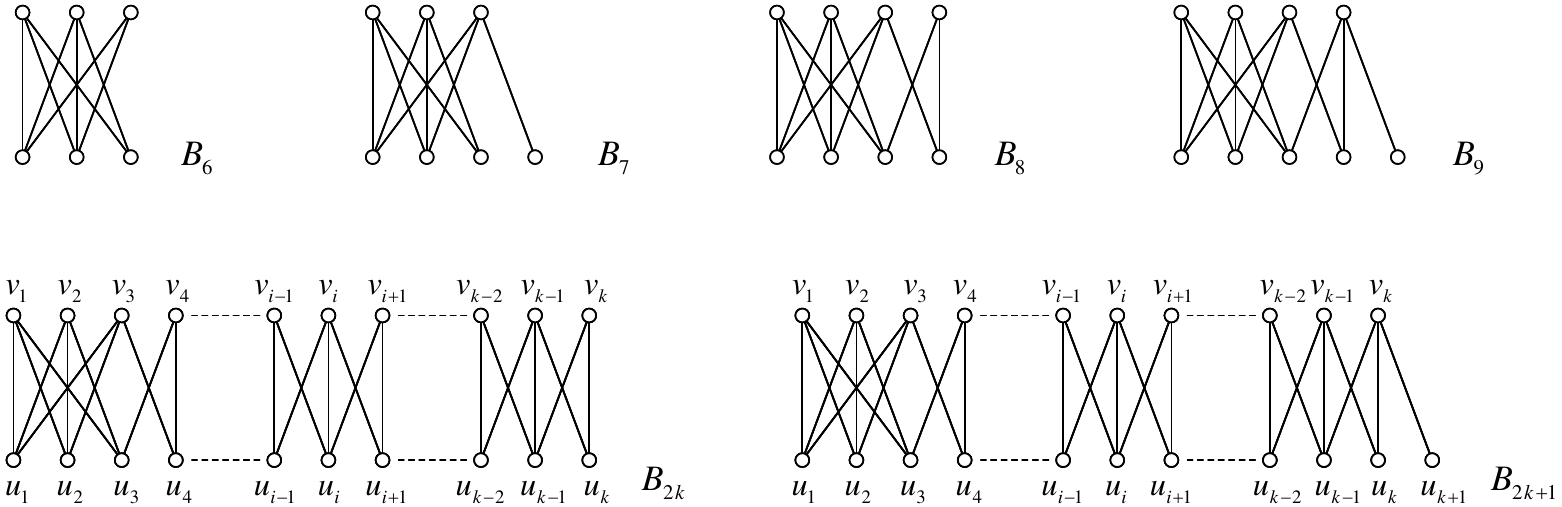}}
  \end{center}
  \caption{Subcubic bipartite graphs.}
  \label{fig1}
  \end{figure}

\begin{lemma}\label{lem5}
Let $\rho$ be the spectral radius of $B_{n}$ with $n\geq 6$. If $n$ is even, then
$$4\sin^{2}\left(\frac{\pi}{2n+2}\right)\leq 3-\rho\leq \frac{4n+48}{n-2}\sin^{2}\left(\frac{\pi}{2n}\right).$$
\end{lemma}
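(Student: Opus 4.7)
The plan is to use the $\sigma$-symmetry of $B_n$ (where $\sigma$ is the involution $a_i \leftrightarrow b_i$ on the natural bipartition $X = \{a_1,\dots,a_k\}$, $Y = \{b_1,\dots,b_k\}$ with $k = n/2$) to reduce both bounds to the $k \times k$ biadjacency matrix $N$, defined by $N_{ij} = 1$ iff $a_i b_j \in E(B_n)$. Since $\sigma$ is an automorphism, $N$ is symmetric, the Perron eigenvector of $A(B_n)$ is $\sigma$-invariant (hence constant on the orbits $\{a_i, b_i\}$), and consequently $\rho = \rho(N)$, i.e., $3 - \rho = \lambda_{\min}(3I - N)$. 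In this representation the $K_{3,3}-e$ seed of $B_n$ manifests as the off-tridiagonal entry $N_{1,4} = N_{4,1} = 1$, and the further automorphism $a_2 \leftrightarrow a_3$ enforces $y_2 = y_3$ on any Perron-respecting vector.

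For the upper bound, I would apply Rayleigh--Ritz, $3 - \rho \leq y^T(3I - N) y / y^T y$, with the explicit test vector
$$y_1 = y_2 = y_3 = \cos(3\beta), \qquad y_i = \cos\bigl((2i-3)\beta\bigr) \text{ for } 4 \leq i \leq k, \qquad \beta := \frac{\pi}{2n}.$$
A direct expansion shows that $y^T(3I-N)y$ collapses to the sum of squares $2(y_1 - y_2)^2 + (y_1 - y_4)^2 + \sum_{j=4}^{k-1}(y_j - y_{j+1})^2 + y_k^2$: the first term vanishes by construction, the middle ones telescope via $y_j - y_{j+1} = 2\sin((j-1)\pi/(2k))\sin\beta$ together with identity \eqref{eq6}, and $y_k^2 = \sin^2(3\beta)$. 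Simultaneously, identity \eqref{eq7} collapses $y^T y$ to $2\cos^2(3\beta) + (k-2)/2$. The elementary inequality $\sin^2(3\beta) \leq 9\sin^2\beta$ then yields $3 - \rho \leq (4k+6)\sin^2\beta / (k-2)$, and a short arithmetic check shows that this is dominated by $(4n+48)\sin^2(\pi/(2n))/(n-2)$ whenever $k = n/2 \geq 3$.

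For the lower bound, I would use the Collatz--Wielandt characterization of $\rho(N)$: if $u \in \mathbb{R}^k$ is positive and $(Nu)_i \leq c\,u_i$ componentwise, then $\rho(N) \leq c$. Setting $\phi := \pi/(n+1)$, so that $4\sin^2(\pi/(2n+2)) = 2 - 2\cos\phi$, the ansatz is
$$u_i = \sin\bigl((k+1-i)\phi\bigr) \text{ for } 4 \leq i \leq k, \quad u_2 = u_3 = \frac{\sin((k-2)\phi)}{2\cos\phi - 1}, \quad u_1 = \sin\bigl((k-2)\phi\bigr),$$
which is positive for all $k \geq 3$, since then $2\cos\phi > 1$. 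The bulk equations at $5 \leq j \leq k-1$ and the Neumann-type condition at $j = k$ become equalities from the identity $2\cos\phi \sin(m\phi) = \sin((m+1)\phi) + \sin((m-1)\phi)$; the same identity handles $j = 4$, while the twin conditions at $j = 2, 3$ hold by construction. The remaining check, at $j = 1$, reduces after clearing denominators to $(2\cos\phi - 1)\sin((k-1)\phi) \geq (3 - 2\cos\phi)\sin((k-2)\phi)$; using $\sin((k-r)\phi) = \cos((2r+1)\phi/2)$, this collapses to $\cos\phi \geq \cos(2\phi)$, which holds for all $\phi \in (0, \pi/2)$.

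The hard part is the $K_{3,3}-e$ seed: because the orbit $\{a_1, b_1\}$ is joined directly to $\{a_4, b_4\}$ (bypassing the twin orbit $\{a_2, a_3, b_2, b_3\}$), $N$ is not tridiagonal, and so Proposition \ref{pro1} cannot be invoked by a direct termwise comparison to $M_k$. Both test vectors must therefore be carefully calibrated at the three ``seed'' indices $1, 2, 3$ so that the asymmetric boundary conditions are satisfied simultaneously with the sinusoidal profile dictated by the path-like tail; the most delicate checks are the telescoping cancellation $(y_1 - y_4)^2 + \sum_{j \geq 4}(y_j - y_{j+1})^2 = 2(k-3)\sin^2\beta$ in the upper-bound computation and the reduction to $\cos\phi \geq \cos(2\phi)$ in the lower-bound seed constraint.
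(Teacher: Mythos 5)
Your proposal is correct in substance, but it reaches the two bounds by a genuinely different route from the paper, most notably for the lower bound. The paper works with the full $2k$-vertex quadratic form $3-\rho=\mathtt{x}^{t}(L+\Lambda)\mathtt{x}$, simply \emph{discards} the nonnegative term coming from the extra seed edge (the $2(a_{1}-a_{3})^{2}$ term), and thereby lands exactly on the quadratic form of the tridiagonal matrix $M_{k}$, whose least eigenvalue $4\sin^{2}(\pi/(4k+2))$ is read off from Willms' formula (Proposition \ref{pro1}). So your remark that ``Proposition \ref{pro1} cannot be invoked'' is true only for a termwise matrix comparison; a quadratic-form comparison that throws away the off-tridiagonal edge works in one line and is how the paper does it. Your Collatz--Wielandt supersolution is instead fully self-contained (no appeal to Willms), and I verified its row checks: the bulk and boundary rows are equalities, the twin rows hold by the choice of $u_{2}=u_{3}$, row $4$ closes because $u_{1}=\sin((k-2)\phi)$ coincides with the value the tridiagonal pattern would assign to index $3$, and the row-$1$ check reduces to $\sin(k\phi)-2\sin((k-2)\phi)+\sin((k-3)\phi)\geq 0$, i.e.\ $\cos(\phi/2)\geq\cos(3\phi/2)$ --- a true monotonicity fact, though not literally the inequality $\cos\phi\geq\cos(2\phi)$ you state; both deliver the same constant $4\sin^{2}(\pi/(2n+2))$. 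For the upper bound both you and the paper use Rayleigh quotients with a sinusoidal test vector; yours is better calibrated (forcing $y_{1}=y_{2}=y_{3}$ kills the twin term and folds the seed edge into the telescoping sum via \eqref{eq6}, giving the slightly sharper constant $(4k+6)/(k-2)$, which indeed is dominated by $(4n+48)/(n-2)$ for $k\geq 3$), whereas the paper uses $\mathtt{z}(u_{i})=\sin(\frac{k-i}{2k}\pi)$ and bounds the seed term crudely. The only gaps worth flagging are bookkeeping ones: your general index pattern presumes $k\geq 5$ (for $k=3$ the degree deficiency sits at index $1$ rather than index $k$, and for $k=3,4$ some rows degenerate), so the cases $n=6,8$ need a separate but trivial verification.
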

\begin{proof}
  Suppose that $n$ is even, that is, $n=2k$ with $k\geq 3$.
  We may label the vertices of $B_{2k}$ as presented in Fig. \ref{fig1}.
  Let $\mathtt{x}$ be the unit eigenvector of the adjacency matrix $A(B_{2k})$ corresponding to $\rho$.
  According to the symmetry of $B_{2k}$,
  one can see that $\mathtt{x}(u_{i})=\mathtt{x}(v_{i})$ for $1\leq i\leq k$.
  Set $\mathtt{x}(u_{i})=a_{i}$ for $1\leq i\leq k$.
  Let $L$ be the Laplacian matrix of $B_{2k}$,
  and $\Lambda$ be a diagonal matrix with diagonal elements $1,1,0,\ldots,0$.
  The two nonzero elements of $\Lambda$ are labeled by the vertices $u_{k}$ and $v_{k}$.
  Thus,
  \begin{equation}\label{eq1}
   3I-A(B_{2k})=L+\Lambda.
  \end{equation}
  Since $\rho=\mathtt{x}^{t}A(B_{2k})\mathtt{x}$, we have
  \begin{equation}\label{eq2}
    3-\rho=3\mathtt{x}^{t}\mathtt{x}-\mathtt{x}^{t}A(B_{2k})\mathtt{x}
    =\mathtt{x}^{t}(3I-A(B_{2k}))\mathtt{x}.
  \end{equation}
  Combining (\ref{eq1}) and (\ref{eq2}), it follows that $3-\rho=\mathtt{x}^{t}(L+\Lambda)\mathtt{x}$.
  Note that
  \[
    \mathtt{x}^{t} L \mathtt{x}=2(a_{1}-a_{3})^{2}+2 \sum_{i=1}^{k-1}(a_{i}-a_{i+1})^{2}~~~~\text{and}~~~\mathtt{x}^{t}\Lambda\mathtt{x}=2a_{k}^{2}.
  \]
  In $B_{2k}$, the vertices $u_{1}$ and $u_{2}$ have the same neighbors,
  hence $\mathtt{x}(u_{1})=\mathtt{x}(u_{2})$, that is, $a_{1}=a_{2}$. It follows that
  \begin{eqnarray}\label{eq4}
    \begin{split}
    3-\rho&=\mathtt{x}^{t}L\mathtt{x}+\mathtt{x}^{t}\Lambda\mathtt{x}\\
    &=2(a_{1}-a_{3})^{2}+2 \sum_{i=1}^{k-1}(a_{i}-a_{i+1})^{2}+2a_{k}^{2}\\
    &\geq 2 \sum_{i=1}^{k-1}(a_{i}-a_{i+1})^{2}+2a_{k}^{2}.
    \end{split}
  \end{eqnarray}
  Let $\mathtt{y}=(a_{1},a_{2},\ldots,a_{k})$.
  Suppose that $M_{k}$ is the tridiagonal matrix defined in (\ref{eq3}).
  Thus, one can see that
  \begin{equation}\label{eq5}
    \mathtt{y}^{t}M_{k}\mathtt{y}=a_{k}^{2}+\sum_{i=1}^{k-1}(a_{i}-a_{i+1})^{2}.
  \end{equation}
  Combining (\ref{eq4}) and (\ref{eq5}), we obtain that
  \begin{equation}
    3-\rho\geq 2\mathtt{y}^{t}M_{k}\mathtt{y}.
  \end{equation}
  Let $\lambda_{\min}$ be the least eigenvalue of the matrix $M_{k}$.
  Note that $\mathtt{y}^{t}\mathtt{y}=\sum_{i=1}^{k}a_{i}^{2}=(\mathtt{x}^{t}\mathtt{x})/2=1/2$.
  Therefore,
  $$\lambda_{\min}\leq \frac{\mathtt{y}^{t}M_{k}\mathtt{y}}{\mathtt{y}^{t}\mathtt{y}}=2\mathtt{y}^{t}M_{k}\mathtt{y}.$$
  By Proposition \ref{pro1}, we have $\lambda_{\min}=4\sin^{2}(\pi/(4k+2))$. Combining the above equations, it follows that
  \begin{equation}
    3-\rho\geq 2\mathtt{y}^{t}M_{k}\mathtt{y}\geq \lambda_{\min}=4\sin^{2}\left(\frac{\pi}{4k+2}\right)=4\sin^{2}\left(\frac{\pi}{2n+2}\right).
  \end{equation}

  In the following, we will give an upper bound for $3-\rho$. In order to prove the upper bound, we construct a vector on the vertices of $B_{2k}$.
  Let $\mathtt{z}$ be the $n$-vector whose entries satisfy
  $$\mathtt{z}(u_{i})=\mathtt{z}(v_{i})=\sin\left(\frac{k-i}{2k}\right),$$
  for $1\leq i\leq k$. According to (\ref{eq6}), we can see that
  \begin{equation}\label{eq8}
    \mathtt{z}^{t}\mathtt{z}=2\sum_{i=1}^{k}\sin^{2}\left(\frac{k-i}{2k}\pi\right)=2\sum_{i=1}^{k-1}\sin^{2}\left(\frac{i}{2k}\pi\right)=k-1.
  \end{equation}
  Note also that
  \begin{eqnarray}\label{eq9}
    \begin{split}
    \mathtt{z}^{t}(L+\Lambda)\mathtt{z}&=2(\mathtt{z}(u_{1})-\mathtt{z}(u_{3}))^{2}+2\mathtt{z}(u_{k})^{2}
        +2\sum_{i=1}^{k-1}(\mathtt{z}(u_{i})-\mathtt{z}(u_{i+1}))^{2}\\
    &=2\left(\sin\left(\frac{k-1}{2k}\pi\right)-\sin\left(\frac{k-3}{2k}\pi\right)\right)^{2}
        +2\sum_{i=0}^{k-2}\left(\sin\left(\frac{i}{2k}\pi\right)-\sin\left(\frac{i+1}{2k}\pi\right)\right)^{2}\\
    &=8\sin^{2}\left(\frac{2}{4k}\pi\right)\cos^{2}\left(\frac{2k-4}{4k}\pi\right)
        +8\sin^{2}\left(\frac{1}{4k}\pi\right)\sum_{i=0}^{k-2}\cos^{2}\left(\frac{2i+1}{4k}\pi\right)\\
    &\leq 24\sin^{2}\left(\frac{1}{4k}\pi\right)+8\sin^{2}\left(\frac{1}{4k}\pi\right)\sum_{i=0}^{k-1}\cos^{2}\left(\frac{2i+1}{4k}\pi\right)\\
    &=\left(4k+24\right)\sin^{2}\left(\frac{1}{4k}\pi\right),
    \end{split}
  \end{eqnarray}
  where the last equality is obtained from (\ref{eq7}).
  Moreover, since
  $$\rho\geq \frac{\mathtt{z}^{t}A(B_{2k})\mathtt{z}}{\mathtt{z}^{t}\mathtt{z}},$$
  we obtain that
  \begin{eqnarray}\label{eq10}
    3-\rho\leq \frac{3\mathtt{z}^{t}\mathtt{z}}{\mathtt{z}^{t}\mathtt{z}}-\frac{\mathtt{z}^{t}A(B_{2k})\mathtt{z}}{\mathtt{z}^{t}\mathtt{z}}
    =\frac{\mathtt{z}^{t}(L+\Lambda)\mathtt{z}}{\mathtt{z}^{t}\mathtt{z}}.
  \end{eqnarray}
  Combining (\ref{eq8}), (\ref{eq9}) and (\ref{eq10}), it follows that
  \begin{equation}
    3-\rho\leq \frac{\left(4k+24\right)\sin^{2}\left(\frac{\pi}{4k}\right)}{k-1}=\frac{4n+48}{n-2}\sin^{2}\left(\frac{\pi}{2n}\right),
  \end{equation}
  as required.
\end{proof}

Now, we are in a position to present the proof of  Theorem \ref{th1}.

\medskip
\noindent  \textbf{Proof of Theorem \ref{th1}.}
As mentioned above, $B_{n}$ is the unique bipartite graph in $\mathcal{B}(n,3)$ with the maximum spectral radius.
  Thus, $\lambda=\rho(B_{n})$. It is equivalent to show that
  $$\lim_{n\to \infty}n^{2}(3-\rho(B_{n}))=\pi^{2}.$$
  We divide the proof into two cases according to the parity of $n$.

  \smallskip

  \noindent{\bf Case 1}. $n$ is even.

  \smallskip

  According to Lemma \ref{lem5}, it follows that
  \begin{equation}
    \lim_{n\to \infty}4n^{2}\sin^{2}\left(\frac{\pi}{2n+2}\right)\leq \lim_{n\to \infty}n^{2}(3-\rho(B_{n}))\leq
    \lim_{n\to \infty}\frac{n^{2}(4n+48)}{n-2}\sin^{2}\left(\frac{\pi}{2n}\right).
  \end{equation}
  The left limit is
  $$\lim_{n\to \infty}4n^{2}\sin^{2}\left(\frac{\pi}{2n+2}\right)=\lim_{n\to \infty}\frac{4n^{2}\pi^{2}}{(2n+2)^{2}}=\pi^{2}.$$
  On the other hand, one can see that
  $$\lim_{n\to \infty}\frac{n^{2}(4n+48)}{n-2}\sin^{2}\left(\frac{\pi}{2n}\right)=\lim_{n\to \infty}\frac{n^{2}(4n+48)\pi^{2}}{4n^{2}(n-2)}=\pi^{2}.$$
  Thus, the result holds in this case.

  \smallskip

  \noindent{\bf Case 2}. $n$ is odd.

  \smallskip

  Obviously, $B_{n}$ is a proper subgraph of $B_{n+1}$, and $B_{n-1}$ is a proper subgraph of $B_{n}$.
  Using Lemma \ref{lem5} for $B_{n-1}$ and $B_{n+1}$, respectively, we obtain that
  $$3-\rho(B_{n-1})\leq \frac{4n+44}{n-3}\sin^{2}\left(\frac{\pi}{2n-2}\right)$$
  and
  $$3-\rho(B_{n+1})\geq 4\sin^{2}\left(\frac{\pi}{2n+4}\right).$$
  Note that $\rho(B_{n-1})<\rho(B_{n})< \rho(B_{n+1})$. This implies that
  $$\lim_{n\to \infty}n^{2}(3-\rho(B_{n+1}))\leq \lim_{n\to \infty}n^{2}(3-\rho(B_{n}))\leq \lim_{n\to \infty}n^{2}(3-\rho(B_{n-1})).$$
  It follows that
  $$\lim_{n\to \infty}4n^{2}\sin^{2}\left(\frac{\pi}{2n+4}\right)\leq \lim_{n\to \infty}n^{2}(3-\rho(B_{n}))
  \leq \lim_{n\to \infty}\frac{n^{2}(4n+44)}{n-3}\sin^{2}\left(\frac{\pi}{2n-2}\right),$$
  that is,
  $$\pi^{2}\leq \lim_{n\to \infty}n^{2}(3-\rho(B_{n})) \leq \pi^{2}.$$
  Hence $\lim_{n\to \infty}n^{2}(3-\rho(B_{n}))=\pi^{2}$. The proof is completed.
\hspace*{\fill}$\Box$

\section{Irregular bipartite graph with $\Delta\geq \left\lfloor n/2\right\rfloor$}

In this section, we consider the maximum spectral radius of irregular bipartite graphs
with $\Delta\geq \left\lfloor n/2 \right\rfloor$. An upper bound for the spectral radius
of bipartite graphs with given size was presented by Bhattacharya, Friedland and Peled \cite{Bhattacharya2008}.
The following lemma is part of \cite[Proposition 2.1]{Bhattacharya2008}.
\begin{lemma}{\rm(\cite{Bhattacharya2008})}\label{lem1}
  Let $G$ be a bipartite graph of size $m$. Then
  $$\rho(G)\leq \sqrt{m},$$
  with equality if and only if $G$ is a disjoint union of a complete bipartite graph and isolated vertices.
\end{lemma}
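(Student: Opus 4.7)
The plan is to exploit the bipartite block structure of the adjacency matrix. Fix a bipartition $(X,Y)$ of $G$ and let $B$ denote the $|X|\times|Y|$ biadjacency matrix, so that $A(G)$ has zero diagonal blocks with $B$ in the upper right and $B^{T}$ in the lower left. Then $A(G)^{2}$ is block diagonal with diagonal blocks $BB^{T}$ and $B^{T}B$, which share the same nonzero eigenvalues. Since $A(G)$ is symmetric, $\rho(G)^{2}=\lambda_{\max}(A(G)^{2})=\lambda_{\max}(BB^{T})$, so the problem reduces to bounding the top eigenvalue of $BB^{T}$.

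The key step is a one-line trace bound: $BB^{T}$ is positive semidefinite, so its largest eigenvalue cannot exceed its trace, and
\[
\rho(G)^{2}=\lambda_{\max}(BB^{T})\leq \operatorname{tr}(BB^{T})=\sum_{i,j}B_{ij}^{2}=\sum_{i,j}B_{ij}=m,
\]
where the penultimate equality uses that $B$ is a $0/1$ matrix. Taking square roots gives $\rho(G)\leq\sqrt{m}$.

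For the equality characterization, the inequality is sharp only if the remaining eigenvalues of $BB^{T}$ all vanish, which forces $\operatorname{rank}(B)=\operatorname{rank}(BB^{T})\leq 1$. I would then classify $0/1$ matrices of rank at most one by writing $B=uv^{T}$ and noting that whenever $u_{i}$ and $v_{j}$ are both nonzero, $u_{i}v_{j}$ must equal $1$; this pins the nonzero entries of $u$ down to a single value, and similarly for $v$. Rescaling yields $u=\mathbf{1}_{R}$ and $v=\mathbf{1}_{C}$ for some $R\subseteq X$ and $C\subseteq Y$, so $B$ is the indicator matrix of $R\times C$. Translating back to $G$, this says that $G$ is the complete bipartite graph on $R\cup C$ together with the isolated vertices in $(X\setminus R)\cup(Y\setminus C)$; the converse is immediate from $\rho(K_{p,q})=\sqrt{pq}$.

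I expect the main obstacle to lie not in the inequality itself but in the equality case: one must tidily dispose of the degenerate subcase $B=0$ and argue that a $0/1$ matrix of rank at most one is genuinely an indicator matrix rather than a more general real rank-one matrix. This is a short calculus of supports, but it is the only place where the $0/1$ nature of $B$ does real work beyond the trace identity $\sum B_{ij}^{2}=\sum B_{ij}$.
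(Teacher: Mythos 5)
Your proof is correct. Note that the paper itself offers no proof of this lemma: it is imported verbatim from Bhattacharya, Friedland and Peled \cite{Bhattacharya2008} (part of their Proposition 2.1), and your argument --- bounding the largest singular value of the biadjacency matrix $B$ by its Frobenius norm via $\lambda_{\max}(BB^{T})\leq \operatorname{tr}(BB^{T})=m$, then characterizing equality through $\operatorname{rank}(B)\leq 1$ and the support analysis of a $0/1$ rank-one matrix --- is essentially the standard proof given in that reference, so there is nothing to reconcile.
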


Note that the complete bipartite graph $K_{\Delta,n-\Delta}$ is irregular if $\Delta>\left\lfloor n/2\right\rfloor$.
Indeed, the maximum spectral radius can be attained for the complete bipartite graph $K_{\Delta,n-\Delta}$.

\begin{lemma}\label{lem2}
  Let $G$ be an irregular bipartite graph on $n$ vertices with maximum degree $\Delta$.
  If $\Delta>n-\Delta$, then
  $$\rho(G)\leq \rho(K_{\Delta,n-\Delta}),$$
  with equality if and only if $G\cong K_{\Delta,n-\Delta}$.
\end{lemma}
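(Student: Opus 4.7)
The plan is to combine Lemma \ref{lem1} with a direct edge count arising from the bipartition structure. First, observe that $\rho(K_{\Delta,n-\Delta})=\sqrt{\Delta(n-\Delta)}$, so reducing the claim to the size bound $\rho(G)\le\sqrt{m}$ is natural: if I can show $m\le\Delta(n-\Delta)$, then Lemma \ref{lem1} immediately yields the inequality $\rho(G)\le\rho(K_{\Delta,n-\Delta})$.

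Next I would bound $m$ via the bipartition. Let $(X,Y)$ be the bipartition of $G$ and let $v$ be a vertex of degree $\Delta$. Up to swapping $X$ and $Y$, assume $v\in X$; then $N(v)\subseteq Y$ forces $|Y|\ge\Delta$, and consequently $|X|\le n-\Delta$. Since each edge has exactly one endpoint in $X$ and each vertex in $X$ has degree at most $\Delta$, this gives
\[
m=\sum_{u\in X}d(u)\le|X|\cdot\Delta\le(n-\Delta)\Delta.
\]
Because $\Delta>n-\Delta$, this bound is compatible with the assumption that $v$ has the larger possible degree and is consistent with $|Y|$ being the larger part.

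For the equality characterization, suppose $\rho(G)=\rho(K_{\Delta,n-\Delta})=\sqrt{\Delta(n-\Delta)}$. Then both inequalities $\rho(G)\le\sqrt{m}$ and $m\le\Delta(n-\Delta)$ must be tight. Tightness in the size bound forces $|X|=n-\Delta$, every vertex of $X$ to have degree exactly $\Delta$, and hence $|Y|=\Delta$. Tightness in Lemma \ref{lem1} forces $G$ to be the disjoint union of a complete bipartite graph and some isolated vertices. Combining: each of the $n-\Delta$ vertices in $X$ is adjacent to all $\Delta$ vertices in $Y$, so the complete bipartite component is already $K_{n-\Delta,\Delta}$ on all $n$ vertices, leaving no room for isolated vertices. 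Therefore $G\cong K_{\Delta,n-\Delta}$.

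There is no real obstacle here; the only subtlety is making sure the two equality conditions (from Lemma \ref{lem1} and from the edge-count chain) are used simultaneously and are indeed compatible, which they are because $\Delta>n-\Delta$ rules out the regular balanced case and guarantees $K_{\Delta,n-\Delta}$ is genuinely irregular. The converse direction, $\rho(K_{\Delta,n-\Delta})=\sqrt{\Delta(n-\Delta)}$, is standard and needs no separate argument.
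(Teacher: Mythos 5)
Your proof is correct and follows essentially the same route as the paper: both rest on Lemma \ref{lem1} together with an edge count showing $m\le\Delta(n-\Delta)$. The only real difference is that the paper splits into the cases $|Y|=\Delta$ (handled by edge-addition monotonicity on a spanning subgraph of $K_{\Delta,n-\Delta}$) and $|Y|>\Delta$ (handled by $m\le |X|\,|Y|<\Delta(n-\Delta)$ and Lemma \ref{lem1}), whereas your degree-sum bound $m=\sum_{u\in X}d(u)\le |X|\Delta\le(n-\Delta)\Delta$ treats both cases uniformly and lets the tightness analysis of that chain, plus the equality condition of Lemma \ref{lem1}, deliver the characterization in one pass.
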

\begin{proof}
  Let $G$ be the irregular bipartite graph with maximum spectral radius.
  It suffices to show that $G\cong K_{\Delta,n-\Delta}$.
  Suppose that the bipartition of $G$ is $(X,Y)$. Without loos of generality, assume that
  $|X|\leq |Y|$. Since the maximum degree of $G$ is $\Delta$, $|Y|\geq \Delta$.
  If $|Y|=\Delta$, then $G$ is a spanning subgraph of $K_{\Delta,n-\Delta}$.
  Note that the spectral radius is strictly increasing when adding an edge in a graph.
  Thus, $G\cong K_{\Delta,n-\Delta}$.
  If $|Y|>\Delta$, then
  $$|E(G)|\leq |X|\times |Y|<\Delta(n-\Delta).$$
  By Lemma \ref{lem1}, we obtain that $\rho(G)\leq\sqrt{|E(G)|}<\sqrt{\Delta(n-\Delta)}$.
  However, the spectral radius of $K_{\Delta,n-\Delta}$ equals $\sqrt{\Delta(n-\Delta)}$,
  which implies that $\rho(K_{\Delta,n-\Delta})>\rho(G)$, a contradiction.
\end{proof}

The operation of edge transformation is a classic tool in spectral graph theory.
The following lemma appeared in a number of references (see, for example, \cite{Cvetkovic1997,Stevanovic2018,Wu2005}).

\begin{lemma}{\rm(\cite{Cvetkovic1997,Stevanovic2018,Wu2005})}\label{lem3}
  Let $u$ and $v$ be two vertices of a connected graph $G$, and let $S\subseteq N(u)\backslash N(v)$.
  Let $G'=G-\{wu:w\in S\}+\{wv:w\in S\}$.
  If $\mathtt{x}(v)\geq \mathtt{x}(u)$,
  where $\mathtt{x}$ is the principal eigenvector of $G$, then
  $\rho(G')>\rho(G)$.
\end{lemma}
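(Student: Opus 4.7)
The plan is to establish this classical edge-shifting inequality through the Rayleigh quotient combined with the strict positivity of the Perron eigenvector. I will normalize $\mathtt{x}$ so that $\mathtt{x}^T\mathtt{x}=1$ and implicitly assume $S$ is nonempty, since otherwise $G'=G$ and no strict increase can occur. Because $G$ is connected, $A(G)$ is nonnegative and irreducible, so by the Perron--Frobenius theorem $\mathtt{x}$ has strictly positive entries and $\mathtt{x}^T A(G)\mathtt{x}=\rho(G)$.

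The first step is a direct computation of the change in the quadratic form. Since $G'$ is obtained from $G$ by deleting the edges $\{wu:w\in S\}$ and adding the edges $\{wv:w\in S\}$, with all other adjacencies unchanged, expanding $\mathtt{x}^T A(G')\mathtt{x}-\mathtt{x}^T A(G)\mathtt{x}$ entry by entry gives
\begin{equation*}
\mathtt{x}^T A(G')\mathtt{x}-\mathtt{x}^T A(G)\mathtt{x}=2\sum_{w\in S}\mathtt{x}(w)\bigl(\mathtt{x}(v)-\mathtt{x}(u)\bigr).
\end{equation*}
By hypothesis $\mathtt{x}(v)\geq\mathtt{x}(u)$ and by Perron--Frobenius $\mathtt{x}>0$, so the right-hand side is nonnegative and the Rayleigh--Ritz bound yields $\rho(G')\geq \mathtt{x}^T A(G')\mathtt{x}\geq \rho(G)$. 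In the case $\mathtt{x}(v)>\mathtt{x}(u)$ every summand on the right is strictly positive, and $\rho(G')>\rho(G)$ follows at once.

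The main obstacle will be the boundary case $\mathtt{x}(v)=\mathtt{x}(u)$, where the two quadratic forms coincide on $\mathtt{x}$ and Rayleigh--Ritz alone cannot separate $\rho(G')$ from $\rho(G)$. Here I would argue by contradiction: if $\rho(G')=\rho(G)$, then $\mathtt{x}$ attains the maximum of the Rayleigh quotient of $A(G')$ and must itself be an eigenvector of $A(G')$ for eigenvalue $\rho(G)$. Subtracting the $u$-th coordinate of $A(G')\mathtt{x}=\rho(G)\mathtt{x}$ from that of $A(G)\mathtt{x}=\rho(G)\mathtt{x}$ would then give
\begin{equation*}
\sum_{w\in S}\mathtt{x}(w)=\rho(G)\mathtt{x}(u)-\rho(G)\mathtt{x}(u)=0,
\end{equation*}
contradicting the strict positivity of $\mathtt{x}$ on the nonempty set $S$. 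This forces $\rho(G')>\rho(G)$ in every case, completing the argument.
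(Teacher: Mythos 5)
The paper does not prove this lemma at all --- it is quoted from the cited references (Cvetkovi\'{c}--Rowlinson--Simi\'{c}, Stevanovi\'{c}, Wu--Xiao--Hong) --- so there is no in-paper argument to compare against. Your proof is correct and is essentially the standard argument from those sources: the Rayleigh--Ritz computation $\mathtt{x}^{T}A(G')\mathtt{x}-\mathtt{x}^{T}A(G)\mathtt{x}=2\sum_{w\in S}\mathtt{x}(w)(\mathtt{x}(v)-\mathtt{x}(u))\geq 0$ gives the weak inequality, and your treatment of the boundary case $\mathtt{x}(v)=\mathtt{x}(u)$ is the right one: equality of the spectral radii would force $\mathtt{x}$ to be a $\rho(G)$-eigenvector of the symmetric matrix $A(G')$ as well (this holds even if $G'$ is disconnected), and comparing the $u$-th eigenvalue equations for $A(G)$ and $A(G')$ yields $\sum_{w\in S}\mathtt{x}(w)=0$, which contradicts the strict positivity of the Perron vector of the connected graph $G$. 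The only things worth flagging are the implicit hypotheses you partly acknowledged: $S\neq\emptyset$ is needed (you said so), and one must also have $v\notin S$ so that no loop is created; since $S\subseteq N(u)\setminus N(v)$ this can only fail when $v\in N(u)$, and it is tacitly excluded in all the cited formulations and in every application in this paper.
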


In the following we present the proof of Theorem \ref{th2}. We remark that $H_{n,\Delta}$ is an irregular bipartite graph on $n$ vertices with maximum degree $\Delta$.
If $2\Delta>n$, then $H_{n,\Delta}$ contains $\Delta$ unsaturated vertices.
If $2\Delta=n$ or $n-1$, then $H_{n,\Delta}$ contains two unsaturated vertices.
Note also that $H_{6,3}\cong B_{6}$ and $H_{7,3}\cong B_{7}$ (see Fig. \ref{fig1}).

\medskip
\noindent  \textbf{Proof of Theorem \ref{th2}.}
Suppose that $G$ is the bipartite graph with maximum spectral radius. If $\Delta>\lfloor n/2 \rfloor$, then $\Delta>n-\Delta$.
  For this case, it follows from Lemma \ref{lem2} that $G\cong H_{n,\Delta}$.
  Suppose now that $\Delta=\lfloor n/2 \rfloor$.
  We may assume that the bipartition of $G$ is $(X,Y)$, where $|X|\leq |Y|$.
  According to the parity of $n$, we divide the proof into two cases.

  \smallskip

  \noindent{\bf Case 1}. $n$ is even, that is, $n=2k$ for some $k\geq 2$.

  \smallskip

  Here, $\Delta=k$. Thus, one can see that $|Y|\geq k$.
  Note that $K_{k,k-1}$ is a proper subgraph of $H_{n,\Delta}$. Thus, $\rho(H_{n,\Delta})>\rho(K_{k,k-1})=\sqrt{k(k-1)}$.
  If $|Y|\geq k+1$, then $|X|\leq k-1$. Hence, $|E(G)|\leq \Delta|X|\leq k(k-1)$.
  By Lemma \ref{lem1}, we obtain that $\rho(G)\leq \sqrt{k(k-1)}<\rho(H_{n,\Delta})$, a contradiction.
  Therefore, $|Y|=k$, which implies that $|X|=|Y|=k$. Obviously, $G$ is a proper subgraph of $K_{k,k}$.
  Moreover, since the spectral radius is strictly increasing when adding edges, then $G$ is the graph obtained from $K_{k,k}$ by deleting one edge,
  hence $G\cong H_{n,\Delta}$.

\smallskip

  \noindent{\bf Case 2}. $n$ is odd, that is, $n=2k+1$ for some $k\geq 2$.

  \smallskip

  In this case, $\Delta=k$. If $|Y|\geq k+2$, then $|X|=n-|Y|\leq k-1$. Thus, $|E(G)|\leq \Delta|X|\leq k(k-1)$.
  It follows from Lemma \ref{lem1} that $\rho(G)\leq \sqrt{k(k-1)}$.
  However, since $K_{k,k-1}$ is also a proper subgraph of $H_{n,\Delta}$,
  we have $\rho(H_{n,\Delta})>\rho(K_{k,k-1})=\sqrt{k(k-1)}$. Hence, $\rho(H_{n,\Delta})>\rho(G)$, a contradiction.
  Suppose that $|Y|\leq k+1$. This implies that $|X|=k$ and $|Y|=k+1$.
  Let $Y^{*}=\{v^{*}\in Y: d(v^{*})<k\}$. Thus, for any vertex $v\in Y\backslash Y^{*}$,
  we have $d(v)=k$, and hence $v$ is adjacent to all vertices of $X$.
  If $|Y\backslash Y^{*}|\geq k$, then the subgraph of $G$ induced by $X\cup (Y\backslash Y^{*})$ is isomorphic to $K_{k,|Y\backslash Y^{*}|}$.
  Thus, the maximum degree of $G$ is greater than $k$, a contradiction. Hence, $|Y\backslash Y^{*}|\leq k-1$, and so $|Y^{*}|\geq 2$.
  For vertices in $Y^{*}$, we obtain the following claim.

  \smallskip

  \noindent{\bf Claim}. If $u$ and $v$ are two vertices in $Y^{*}$, then $N(u)\cup N(v)=X$ and $N(u)\cap N(v)=\emptyset$.

  \smallskip

  If $X\backslash (N(u)\cup N(v))\neq \emptyset$, then we choose a vertex $w\in X\backslash (N(u)\cup N(v))$. Thus, $d(w)\leq |Y|-2=k-1$.
  Adding the edge $uw$, we have the resulting graph is also an irregular bipartite graph with maximum degree $\Delta$.
  However, the spectral radius of the resulting graph is greater than that of $G$, which contradicts the maximality of $G$.
  Therefore, we have $N(u)\cup N(v)=X$.
  Let $\mathtt{x}$ be the principal eigenvector of $G$. Without loos of generality, we may assume that $\mathtt{x}(u)\geq \mathtt{x}(v)$.
  Since $u\in Y^{*}$ and $N(u)\cup N(v)=X$, we can find a vertex $w\in N(v)\backslash N(u)$.
  If $N(u)\cap N(v)\neq\emptyset$, then the graph $G+wu-wv$ is also a connected irregular bipartite graph.
  However, according to Lemma \ref{lem3}, it follows that $\rho(G+wu-wv)>\rho(G)$, a contradiction.
  Therefore, $N(u)\cap N(v)=\emptyset$. The claim is proved.

  \smallskip

  Indeed, by Claim, it is easy to see that there are at most two vertices in $Y^{*}$. Combining the fact $|Y^{*}|\geq 2$,
  we know that $Y^{*}$ contains exactly two vertices, say $u$ and $v$. Suppose that $\mathtt{x}$ is the principal eigenvector of $G$,
  and $\mathtt{x}(u)\geq \mathtt{x}(v)$. Let $w^{*}$ be a vertex in $N(v)$. If $|N(v)|>1$, then we consider the graph
  $G'=G-\{wv:w\in N(v)\backslash \{w^{*}\}\}+\{wu:w\in N(v)\backslash \{w^{*}\}\}$.
  It follows from Lemma \ref{lem3} that $\rho(G')>\rho(G)$, a contradiction. Hence $|N(v)|=1$, that is, $v$ is a pendant vertex of $G$.
  Note also that the subgraph of $G$ induced by $X\cup (Y\backslash Y^{*})$ is isomorphic to $K_{k,k-1}$.
  Combining the above observations of $N(u)$ and $N(v)$, one can see that $G\cong H_{n,\Delta}$. Thus, we complete the proof.
\hspace*{\fill}$\Box$

\section{Lower bound of $\Delta-\lambda(n,\Delta)$ of irregular bipartite graphs}

In this section, we present a lower bound of $\Delta-\lambda(n,\Delta)$ for irregular bipartite graphs with general maximum degree $\Delta.$
As mentioned above, $\lambda(n,\Delta)$ is the maximum spectral radius of irregular bipartite graphs in $\mathcal{B}(n,\Delta)$.
Let $\Gamma$ be the irregular bipartite graph in $\mathcal{B}(n,\Delta)$, which attains the maximum spectral radius.
Hence $\rho(\Gamma)=\lambda(n,\Delta)$. In order to establish the lower bound of $\Delta-\lambda(n,\Delta)$,
it suffices to consider the lower bound of $\Delta-\rho(\Gamma)$.
Suppose that the bipartition of $\Gamma$ is $(X,Y)$. Let
$$X^{*}=\{w\in X: d(w)<\Delta\}~~~~ \text{and} ~~~~Y^{*}=\{w\in Y: d(w)<\Delta\}.$$
The following properties of $\Gamma$ are useful.

\begin{lemma}\label{lem6}
  The irregular bipartite graph $\Gamma$ satisfies the following properties.\\
  \noindent (I) $|X^{*}\cup Y^{*}|\geq 2$.\\
  \noindent (II) If $|X^{*}|\geq 1$, $|Y^{*}|\geq 1$ and $|X^{*}|+|Y^{*}|\geq 3$,
then the subgraph of $\Gamma$ induced by $X^{*}\cup Y^{*}$ is isomorphic to a complete bipartite graph.
\end{lemma}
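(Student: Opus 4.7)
The plan is to exploit the handshaking lemma and an edge-addition argument that increases the spectral radius while keeping the graph in $\mathcal{B}(n,\Delta)$.

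For part (I), my approach is a counting argument. Since $\Gamma$ is irregular, $|X^{*}\cup Y^{*}|\geq 1$. Suppose for contradiction that there is exactly one unsaturated vertex, say $v\in Y^{*}$ with $d(v)=d<\Delta$ (the case $v\in X^{*}$ is symmetric). Then every vertex of $X$ has degree $\Delta$ and every vertex of $Y\setminus\{v\}$ has degree $\Delta$. Counting edges from both sides of the bipartition gives $|X|\Delta=(|Y|-1)\Delta+d$, so that $(|X|-|Y|+1)\Delta=d$. Since $\Gamma$ is connected we have $1\leq d\leq \Delta-1$, forcing $|X|-|Y|+1$ to be a nonzero fraction strictly between $0$ and $1$, which is impossible for integers. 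This contradiction establishes (I).

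For part (II), the natural strategy is an extremality argument: pick any $u\in X^{*}$ and $v\in Y^{*}$, and assume for contradiction that $uv\notin E(\Gamma)$. Form $\Gamma'=\Gamma+uv$. Because both $u$ and $v$ had degree strictly less than $\Delta$ in $\Gamma$, their degrees in $\Gamma'$ are at most $\Delta$, so the maximum degree of $\Gamma'$ is still $\Delta$. Since $uv$ is an edge between the two parts, $\Gamma'$ remains bipartite, and it is certainly connected. The hypothesis $|X^{*}|+|Y^{*}|\geq 3$ guarantees that at least one unsaturated vertex other than $u,v$ survives in $\Gamma'$, so $\Gamma'\in\mathcal{B}(n,\Delta)$. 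But adding an edge strictly increases the spectral radius, so $\rho(\Gamma')>\rho(\Gamma)=\lambda(n,\Delta)$, contradicting the maximality of $\Gamma$. Hence every $u\in X^{*}$ and $v\in Y^{*}$ are adjacent, which is exactly the claim that the induced subgraph on $X^{*}\cup Y^{*}$ is complete bipartite.

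The main subtlety I expect is verifying that $\Gamma'$ still lies in $\mathcal{B}(n,\Delta)$; one must use the full strength of the hypothesis $|X^{*}|+|Y^{*}|\geq 3$ to ensure at least one unsaturated vertex remains after adding the edge $uv$. Otherwise, if we had only $|X^{*}|=|Y^{*}|=1$, adding the edge could make the graph regular and push it out of $\mathcal{B}(n,\Delta)$, which is why that case is excluded from the statement. Beyond this bookkeeping, the argument is routine and mirrors the edge-completion step that appeared in the proof of Theorem~\ref{th2}.
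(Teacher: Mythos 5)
Your proposal is correct and follows essentially the same route as the paper: part (I) is the handshaking/parity count on the two sides of the bipartition (the paper states it more tersely as $\sum_{u\in X}d(u)\neq\sum_{v\in Y}d(v)$, which your divisibility computation makes explicit), and part (II) is the same edge-addition extremality argument, using $|X^{*}|+|Y^{*}|\geq 3$ to keep the new graph irregular. No gaps.
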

\begin{proof}
  (I) Obviously, $|X^{*}\cup Y^{*}|\geq 1$ since $\Gamma$ is irregular. If $|X^{*}\cup Y^{*}|=1$,
  then $\Gamma$ contains exactly one vertex of degree less than $\Delta$. Thus, $\sum_{u\in X}d(u)\neq \sum_{v\in Y}d(v).$
  But, this is impossible, since $\Gamma$ is bipartite.

  (II) Suppose, to the contradiction, that there exist two nonadjacent vertices $w_{1}\in X^{*}$ and $w_{2}\in Y^{*}.$
Then we can obtain a new bipartite graph by adding the edge $w_{1}w_{2}.$
  Obviously, the new bipartite graph is also irregular, since $|X^{*}|+|Y^{*}|\geq 3$.
  However, the spectral radius of the new bipartite graph is greater than that of $\Gamma$, which contradicts the maximality of $\Gamma$.
\end{proof}

Let $\mathtt{x}$ be a unit eigenvector of $\Gamma$ corresponding to $\rho(\Gamma)$.
Define
 $$\mathtt{x}(\hat{w})=\max\{\mathtt{x}(w):w\in \Gamma\}~~~~\text{and}~~~~\mathtt{x}(\check{w})=\min\{x(w):w\in \Gamma\}.$$
Note that $\Gamma$ is irregular. One can see that $\hat{w}\neq \check{w}$.
Next we estimate the distance between $\hat{w}$ and $\check{w}$ in $\Gamma$.

\begin{lemma}\label{lem7}
   The distance between $\hat{w}$ and $\check{w}$ is at most $2(n-1)/\Delta$.
\end{lemma}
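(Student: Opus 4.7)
The plan is to take a shortest path $P\colon v_0=\hat{w},v_1,\ldots,v_D=\check{w}$ in $\Gamma$ and bound $D$ by counting degrees along $P$. The key combinatorial fact, exploiting both the bipartiteness of $\Gamma$ and the shortest-path property of $P$, is that every vertex $u\in V(\Gamma)$ has at most two neighbors on $P$. Indeed, for off-path $u$ its $P$-neighbors lie in the bipartition class opposite to $u$, hence at indices of equal parity, and the shortest-path condition $d_\Gamma(v_i,v_j)\le 2$ for $v_i,v_j\sim u$ forces these indices to differ by exactly $0$ or $2$; for on-path $u=v_k$, bipartiteness rules out the same-part chord $v_k\sim v_{k\pm 2}$ and shortest-pathness rules out longer chords, so the only $P$-neighbors of $v_k$ are $v_{k-1}$ and $v_{k+1}$.

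Summing this over path vertices, the $D$ path edges contribute $2D$ and each of the $n-D-1$ off-path vertices contributes at most $2$, yielding
\[
\sum_{i=0}^{D} d_\Gamma(v_i)\ \le\ 2D+2(n-D-1)\ =\ 2(n-1).
\]
If every $v_i$ were saturated, this would immediately give $(D+1)\Delta\le 2(n-1)$, stronger than the claimed bound.

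To handle unsaturated vertices on $P$, I would invoke Lemma~\ref{lem6}: by part (II), whenever both $X^{*}$ and $Y^{*}$ are nonempty with $|X^{*}|+|Y^{*}|\ge 3$, the unsaturated vertices of $\Gamma$ induce a complete bipartite subgraph and thus lie pairwise at distance at most $2$, so only a handful of them at consecutive indices can appear on $P$. The degenerate cases $|X^{*}|+|Y^{*}|=2$, or $X^{*}$ or $Y^{*}$ empty, are controlled by the bipartite degree-balance $\sum_{X}d=\sum_{Y}d$ combined with the Perron relation $d_\Gamma(\hat{w})\ge\rho(\Gamma)$ (which is close to $\Delta$); this is enough to show that the total degree deficit on $P$ is small, from which one recovers $D\Delta\le 2(n-1)$.

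The main obstacle is precisely this last step: the neighborhood-counting argument delivers $\sum_i d_\Gamma(v_i)\le 2(n-1)$ at once, but converting this into a bound on $D\Delta$ requires careful bookkeeping of the degree deficit contributed by the at most two or three unsaturated vertices that Lemma~\ref{lem6} permits on $P$, with the slack at the endpoints $v_0,v_D$ (which have only one on-path neighbor rather than two) playing a crucial role in absorbing the deficit.
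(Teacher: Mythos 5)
Your core count is correct and genuinely different from the paper's: you bound $\sum_{i=0}^{D}d_\Gamma(v_i)$ from above by $2(n-1)$ using the (correct and nice) observation that in a bipartite graph every vertex has at most two neighbours on a shortest path, whereas the paper bounds $n$ from below by $|V(P)|$ plus the sizes of the pairwise disjoint sets $N(u_i)\setminus V(P)$ taken over alternate \emph{saturated} path vertices. The difference matters exactly at the step you yourself flag as ``the main obstacle,'' and that step does not close as you describe it. Writing $T=\sum_{i}(\Delta-d_\Gamma(v_i))$ for the total degree deficit on $P$, your inequality reads $(D+1)\Delta-T\le 2(n-1)$, so to conclude $D\Delta\le 2(n-1)$ you need $T\le\Delta$. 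The tools you cite (Lemma~\ref{lem6}, the degree balance $\sum_X d=\sum_Y d$, and the Perron inequality $d_\Gamma(\hat w)\ge\rho$) control only the \emph{number} of unsaturated vertices near $P$, not their deficits: a single interior path vertex of degree $2$ together with the endpoint $\check w$ --- which the paper shows is necessarily unsaturated and which may have degree $1$ --- already gives $T\ge(\Delta-2)+(\Delta-1)=2\Delta-3>\Delta$ for $\Delta>3$. So even after the bookkeeping your method yields only $D\le 2(n-1)/\Delta+O(1)$, an additive constant short of the stated bound, and the claim that the endpoint slack ``absorbs the deficit'' is quantitatively false (that slack is worth exactly $2$ units in the degree sum and has already been spent in getting $2(n-1)$ rather than $2n$).

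The ingredient you are missing is the edge-rotation argument of Lemma~\ref{lem3}, which you never invoke: the paper uses it to show that at most one \emph{interior} path vertex in the part containing $\hat w$ can be unsaturated (rotating an edge from one unsaturated interior vertex to another would increase the spectral radius), and Lemma~\ref{lem6} to get at most one unsaturated path vertex in the other part. Even granting these facts, the paper's vertex count is the more robust accounting scheme here: discarding an unsaturated $u_i$ from the disjoint-neighbourhood union costs a single additive term $\Delta-2$, which is compensated by the $|V(P)|$ summand, whereas in your degree sum every unit of deficit is lost. If you want to keep your two-neighbours-on-$P$ observation, use it to bound the number of edges leaving $V(P)$ and then restrict attention to saturated path vertices as the paper does; as written, the proposal does not prove the lemma.
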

\begin{proof}
  Suppose that $d(\check{w})=\Delta$. Since $\mathtt{x}$ is the eigenvector corresponding to $\rho(\Gamma)$,
  we have $\rho(\Gamma)\mathtt{x}=A(\Gamma)\mathtt{x}$. It follows that
  $$\rho(\Gamma)\mathtt{x}(\check{w})=\sum_{v\in N(\check{w})}\mathtt{x}(v)\geq \Delta\mathtt{x}(\check{w}),$$
  which implies that $\rho(\Gamma)\geq \Delta$, a contradiction. Therefore, we obtain that $d(\check{w})<\Delta$.
  Denote by $\text{dist}(\hat{w},\check{w})$ the distance between $\hat{w}$ and $\check{w}$.

  \smallskip

  \noindent{\bf Case 1}. $\hat{w}$ and $\check{w}$ belong to the same part.

  \smallskip

  Assume that $\{\hat{w},\check{w}\}\subseteq X$.
  Let $P: \hat{w}=u_{0}v_{1}u_{1}\cdots v_{k}u_{k}=\check{w}$ be a shortest path from $\hat{w}$ to $\check{w}$.
  Thus, $V(P)\cap X=\{u_{0},u_{1},\ldots, u_{k}\}$ and $V(P)\cap Y=\{v_{1},v_{2}\ldots, u_{k}\}$.
  We claim that $|X^{*}\cap V(P)|\leq 3$. If not, we can find two vertices $u_{p}$ and $u_{q}$ in $X^{*}\cap V(P)$ with $1\leq p<q<k$.
  Without loos of generality, we may suppose that $\mathtt{x}(u_{p})\leq \mathtt{x}(u_{q})$. Since $P$ is a shortest path, $v_{p}$ and $u_{q}$ are nonadjacent.
  It is easy to see that the graph $\Gamma-u_{p}v_{p}+u_{q}v_{p}$ also belongs to $\mathcal{B}(n,\Delta)$.
  By Lemma \ref{lem3}, we have $\rho(\Gamma-u_{p}v_{p}+u_{q}v_{p})>\rho(\Gamma)$, a contradiction.
  Note that $P$ is a shortest path. One can see that $N(u_{i})\cap N(u_{j})=\emptyset$ for $|i-j|\geq 2$.
  It follows that
  \begin{equation}\label{eq11}
    \left|\bigcup_{u\in V(P)\cap X}N(u)\backslash V(P)\right|\geq \left\lceil \frac{k-2}{2}\right\rceil(\Delta-2).
  \end{equation}

  If $|Y^{*}\cap V(P)|\geq 2$,
  then it follows from Lemma \ref{lem6} that $u_{k}$ is adjacent to all vertices of $Y^{*}\cap V(P)$,
  contradicting the fact that $P$ is a shortest path. Therefore, $|Y^{*}\cap V(P)|\leq 1$.
  According to the shortest path $P$, it follows that $N(v_{i})\cap N(v_{j})=\emptyset$ for $|i-j|\geq 2$.
  Thus, we obtain that
  \begin{equation}\label{eq12}
  \left|\bigcup_{v\in V(P)\cap Y}N(v)\backslash V(P)\right|\geq \left\lceil \frac{k-1}{2}\right\rceil(\Delta-2).
  \end{equation}
  Combining (\ref{eq11}) and (\ref{eq12}), it follows that
  \begin{eqnarray*}
    n&\geq& |V(P)|+\left|\bigcup_{u\in V(P)\cap X}N(u)\backslash V(P)\right|+\left|\bigcup_{v\in V(P)\cap Y}N(v)\backslash V(P)\right|\\
    &\geq &2k+1+\left\lceil \frac{k-2}{2}\right\rceil(\Delta-2)+\left\lceil \frac{k-1}{2}\right\rceil(\Delta-2)\\
    &\geq& k\Delta+1,
  \end{eqnarray*}
  which implies that $k\leq (n-1)/\Delta$. Hence $\text{dist}(\hat{w},\check{w})=2k\leq 2(n-1)/\Delta$.

  \smallskip

  \noindent{\bf Case 2}. $\hat{w}$ and $\check{w}$ belong to different parts.

  \smallskip

  Assume that $\hat{w}\in X$ and $\check{w}\in Y$.
  Let $P: \hat{w}=v_{1}u_{1}\cdots v_{k}u_{k}=\check{w}$ be a shortest path from $\hat{w}$ to $\check{w}$.
  Thus, $V(P)\cap X=\{u_{1},u_{2},\ldots, u_{k}\}$ and $V(P)\cap Y=\{v_{1},v_{2}\ldots, u_{k}\}$.

  If $|V(P)\cap X^{*}|\geq 3$, then there exist two vertices $u_{p}$ and $u_{q}$ in $V(P)\cap X^{*}$ with $1\leq p<q<k$.
  We may assume that $\mathtt{x}(u_{p})\leq \mathtt{x}(u_{q})$. Since $P$ is a shortest path, $v_{p}$ and $u_{q}$ are nonadjacent.
  It is easy to see that the graph $\Gamma-u_{p}v_{p}+u_{q}v_{p}$ also belongs to $\mathcal{B}(n,\Delta)$.
  By Lemma \ref{lem3}, we have $\rho(\Gamma-u_{p}v_{p}+u_{q}v_{p})>\rho(\Gamma)$, a contradiction.
  Hence $|V(P)\cap X^{*}|\leq 2$.

  If $|V(P)\cap Y^{*}|\geq 2$, it follows from Lemma \ref{lem6} that $u_{k}$ is adjacent to all vertices in $Y^{*}\cap V(P)$.
Hence $P$ cannot be the shortest, a contradiction. Therefore, $|V(P)\cap Y^{*}|\leq 1$.

  Similar to Case 1, we still see that
  $$\left|\bigcup_{u\in V(P)\cap X}N(u)\backslash V(P)\right|\geq \left\lceil \frac{k-2}{2}\right\rceil(\Delta-2),$$
  and
  $$\left|\bigcup_{v\in V(P)\cap Y}N(v)\backslash V(P)\right|\geq \left\lceil \frac{k-1}{2}\right\rceil(\Delta-2).$$
  Combing the above inequalities, it follows that
  \begin{eqnarray*}
    n&\geq& |V(P)|+\left|\bigcup_{u\in V(P)\cap X}N(u)\backslash V(P)\right|+\left|\bigcup_{v\in V(P)\cap Y}N(v)\backslash V(P)\right|\\
    &\geq& 2k+\left\lceil \frac{k-2}{2}\right\rceil(\Delta-2)+\left\lceil \frac{k-1}{2}\right\rceil(\Delta-2)\\
    &\geq& k\Delta.
  \end{eqnarray*}
  Thus, we have $\text{dist}(\hat{w},\check{w})=2k-1\leq (2n-\Delta)/\Delta\leq 2(n-1)/\Delta$, as required.
\end{proof}

The next theorem presents a lower bound for $\Delta-\rho(\Gamma)$. Before proceeding with the proof,
let us recall an inequality proposed by Shi \cite[Lemma 1]{Shi2009}. If $a,b>0$, then
\begin{equation}\label{eq14}
  a(p-q)^{2}+bq^{2}\geq\frac{abp^{2}}{a+b},
\end{equation}
with equality if and only if $q=ap/(a+b)$.

\begin{theorem}\label{th4}
  The spectral radius $\rho(\Gamma)$ satisfies
  $$\Delta-\rho(\Gamma)\geq \frac{2\Delta}{n(4n+\Delta-4)}.$$
\end{theorem}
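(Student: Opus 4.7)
The plan is to use the standard variational approach: write $\Delta-\rho(\Gamma)=\mathtt{x}^{t}(\Delta I-A(\Gamma))\mathtt{x}$ and decompose $\Delta I-A(\Gamma)=L(\Gamma)+(\Delta I-D)$, where $L(\Gamma)$ is the Laplacian and $D$ the diagonal degree matrix. This separates the quantity into a Laplacian quadratic form plus a nonnegative diagonal contribution coming entirely from unsaturated vertices:
$$\Delta-\rho(\Gamma)=\sum_{uv\in E(\Gamma)}(\mathtt{x}(u)-\mathtt{x}(v))^{2}+\sum_{v\in V(\Gamma)}(\Delta-d(v))\mathtt{x}(v)^{2}.$$

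First I would lower-bound the edge sum by restricting to the edges of a shortest path from $\hat{w}$ to $\check{w}$ and applying Cauchy--Schwarz. By Lemma \ref{lem7}, such a path has at most $k:=2(n-1)/\Delta$ edges, so
$$\sum_{uv\in E(\Gamma)}(\mathtt{x}(u)-\mathtt{x}(v))^{2}\geq \frac{(\mathtt{x}(\hat{w})-\mathtt{x}(\check{w}))^{2}}{k}.$$
Next I would lower-bound the diagonal term. By Lemma \ref{lem6}(I) there are at least two unsaturated vertices, and (as shown inside the proof of Lemma \ref{lem7}) $\check{w}$ itself is one of them. Since $\mathtt{x}(\check{w})$ is the minimum entry, each unsaturated vertex $v$ contributes $(\Delta-d(v))\mathtt{x}(v)^{2}\geq \mathtt{x}(\check{w})^{2}$, so the diagonal sum is at least $2\,\mathtt{x}(\check{w})^{2}$.

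Now apply Shi's inequality (\ref{eq14}) with $a=1/k$, $b=2$, $p=\mathtt{x}(\hat{w})$, $q=\mathtt{x}(\check{w})$ to collapse the two pieces:
$$\frac{(\mathtt{x}(\hat{w})-\mathtt{x}(\check{w}))^{2}}{k}+2\,\mathtt{x}(\check{w})^{2}\geq \frac{2\,\mathtt{x}(\hat{w})^{2}}{1+2k}.$$
Since $\mathtt{x}$ is a unit vector on $n$ coordinates, $\mathtt{x}(\hat{w})^{2}\geq 1/n$, and substituting $k\leq 2(n-1)/\Delta$ (so $1+2k\leq (4n+\Delta-4)/\Delta$) yields precisely $\Delta-\rho(\Gamma)\geq \frac{2\Delta}{n(4n+\Delta-4)}$.

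The main obstacle is getting the correct constant: the factor $2$ in the numerator is delicate, since it requires the diagonal bound $2\,\mathtt{x}(\check{w})^{2}$ rather than the naive $\mathtt{x}(\check{w})^{2}$. If one used only that $\check{w}$ is unsaturated, the same chain of inequalities would give $\frac{2\Delta}{n(2\Delta+4n-4)}$, which is strictly weaker than the target. Thus the crucial structural input is Lemma \ref{lem6}(I), guaranteeing a second unsaturated vertex so that Shi's inequality can be applied with $b=2$ instead of $b=1$; everything else is a routine combination of Cauchy--Schwarz, the Rayleigh quotient, and the distance bound of Lemma \ref{lem7}.
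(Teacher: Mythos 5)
Your proposal is correct and follows essentially the same route as the paper: the decomposition $\Delta I-A=L+(\Delta I-D)$, Cauchy--Schwarz along a shortest $\hat{w}$--$\check{w}$ path bounded via Lemma \ref{lem7}, the bound $2\mathtt{x}(\check{w})^{2}$ on the diagonal term from Lemma \ref{lem6}(I), and Shi's inequality (\ref{eq14}) with $b=2$ combined with $\mathtt{x}(\hat{w})^{2}\geq 1/n$. Your closing observation about why the second unsaturated vertex is needed to get the stated constant is exactly the role Lemma \ref{lem6}(I) plays in the paper's argument.
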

\begin{proof}
  Let $\mathtt{x}$, $\hat{w}$ and $\check{w}$ be defined as above.
  Since $\mathtt{x}$ is the eigenvector corresponding to $\rho(\Gamma)$, by the Rayleigh quotient, we have
  \begin{equation*}
    \rho(\Gamma)=2\sum_{uv\in E(\Gamma)}\mathtt{x}(u)\mathtt{x}(v).
  \end{equation*}
Similar to the arguements in the proof of Lemma \ref{lem5}, we obtain that
  \begin{eqnarray}\label{eq13}
    \begin{split}
    \Delta-\rho(\Gamma)&=\Delta \mathtt{x}^{t}\mathtt{x}-2\sum_{uv\in E(\Gamma)}\mathtt{x}(u)\mathtt{x}(v)\\
    &=\sum_{uv\in E(\Gamma)}(\mathtt{x}(u)-\mathtt{x}(v))^{2}+\sum_{v\in X^{*}\cup Y^{*}}(\Delta-d(v))\mathtt{x}(v)^{2}.
    \end{split}
  \end{eqnarray}
  Let $P$ be the shortest path of length $k$ between $\hat{w}$ and $\check{w}$.
  Using Cauchy-Schwarz inequality, it follows that
  \begin{eqnarray}
    \sum_{uv\in E(\Gamma)}(\mathtt{x}(u)-\mathtt{x}(v))^{2}\geq \sum_{uv\in E(P)}(\mathtt{x}(u)-\mathtt{x}(v))^{2}
    \geq \frac{1}{k}\left(\sum_{uv\in E(P)}(\mathtt{x}(u)-\mathtt{x}(v))\right)^{2}
    =\frac{1}{k}(\mathtt{x}(\hat{w})-\mathtt{x}(\check{w}))^{2}.
  \end{eqnarray}
  Recall that $\mathtt{x}(\hat{w})$ and $\mathtt{x}(\check{w})$ are the maximum and minimum components, respectively.
  One can see that
  $$\mathtt{x}(\check{w})^{2}<\frac{1}{n}<\mathtt{x}(\hat{w})^{2}.$$
  By Lemma \ref{lem6}, we have $|X^{*}\cup Y^{*}|\geq 2$, and hence
  \begin{eqnarray}
    \sum_{v\in X^{*}\cup Y^{*}}(\Delta-d(v))\mathtt{x}(v)^{2}\geq \mathtt{x}(\check{w})^{2}\sum_{v\in X^{*}\cup Y^{*}}(\Delta-d(v))
    \geq 2\mathtt{x}(\check{w})^{2}.
  \end{eqnarray}
  According to (\ref{eq14}), it follows that
  \begin{eqnarray}\label{eq15}
    \frac{1}{k}(\mathtt{x}(\hat{w})-\mathtt{x}(\check{w}))^{2}+2\mathtt{x}(\check{w})^{2}\geq \frac{2}{2k+1}\mathtt{x}(\hat{w})^{2}
    >\frac{2}{n(2k+1)}.
  \end{eqnarray}
 Combining Lemma \ref{lem7} and (\ref{eq13})-(\ref{eq15}), we have
  $$\Delta-\rho(\Gamma)>\frac{2}{n(2k+1)}\geq \frac{2\Delta}{n(4n+\Delta-4)},$$
  which completes the proof.
\end{proof}

By $\rho(\Gamma)=\lambda(n,\Delta)$, Theorem \ref{th3} follows from Theorem \ref{th4}.
It should be noted that the spectral radius of any irregular bipartite graph is not greater than that of $\Gamma$.
Hence Theorem \ref{th4} implies immediately the following consequence.

\begin{corollary}
  Let $G$ be a connected irregular bipartite graph on $n$ vertices with maximum degree $\Delta$. Then
  $$\Delta-\rho(G)>\frac{2\Delta}{n(4n+\Delta-4)}.$$
\end{corollary}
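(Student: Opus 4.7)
The plan is to obtain this Corollary as a one-line consequence of Theorem \ref{th4}. Recall that $\Gamma$ was defined in Section 4 as a connected irregular bipartite graph in $\mathcal{B}(n,\Delta)$ attaining the maximum spectral radius, so that $\rho(\Gamma)=\lambda(n,\Delta)$. Theorem \ref{th4} already furnishes the estimate
\[
\Delta-\rho(\Gamma) > \frac{2\Delta}{n(4n+\Delta-4)},
\]
which is the right-hand side appearing in the statement to be proved.

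The forward step is to compare an arbitrary $G$ to the extremal graph $\Gamma$. By hypothesis $G$ is a connected irregular bipartite graph on $n$ vertices with maximum degree $\Delta$, so $G\in \mathcal{B}(n,\Delta)$, and by the very definition of $\lambda(n,\Delta)$ as the maximum of $\rho$ over this class, we have $\rho(G)\leq \lambda(n,\Delta)=\rho(\Gamma)$. Consequently,
\[
\Delta-\rho(G) \;\geq\; \Delta-\rho(\Gamma) \;>\; \frac{2\Delta}{n(4n+\Delta-4)},
\]
which is exactly the desired inequality.

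There is no genuine obstacle here: the work has all been done in establishing Theorem \ref{th4} (in particular, the distance bound of Lemma \ref{lem7} and the Rayleigh quotient computation in \eqref{eq13}), and the Corollary is merely the observation that an upper bound which holds for the extremum of a class automatically holds for every member of that class. Thus my proposed proof is essentially the single-sentence deduction above, and no new machinery is required.
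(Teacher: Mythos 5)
Your proposal is correct and follows exactly the paper's own route: the authors likewise deduce the corollary by noting that any connected irregular bipartite graph $G$ satisfies $\rho(G)\leq\rho(\Gamma)$, so the bound from Theorem \ref{th4} transfers immediately. (The only cosmetic point is that the strict inequality comes from the last line of the proof of Theorem \ref{th4}, whose statement is written with $\geq$; you use it correctly.)
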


Obviously, for bipartite graphs, this bound is better than the bound in Theorem \ref{th5}, due to Stevanovi\'{c}.

\section{Concluding remarks}

The maximum spectral radius of irregular bipartite graphs is considered in this paper.
In order to get a better estimation of $\lambda(n,\Delta)$, we need to find the extremal graph in $\mathcal{B}(n,\Delta)$ with maximum spectral radius.
The extremal graph in $\mathcal{B}(n,3)$ is completely determined in \cite{Xue2022+}.
If $\Delta\geq \left\lfloor n/2\right\rfloor$, the extremal graph in $\mathcal{B}(n,\Delta)$ is determined in Theorem \ref{th2}.
For other cases, the extremal graph in $\mathcal{B}(n,\Delta)$ is still unknown.
Lemmas \ref{lem6} and \ref{lem7} present some structural properties of the extremal graph, which may be useful for finding the extremal graph in $\mathcal{B}(n,\Delta)$.

To determine the extremal graph, it is crucial to determine its degree sequence.
The degree sequence of the extremal graph in $\mathcal{F}(n,\Delta)$ was considered by Liu and Li \cite{Liu2008}.
They conjectured that the extremal graph in $\mathcal{F}(n,\Delta)$ has exactly one vertex of degree less than $\Delta$.
Until now, very little progress has been made on the degree sequence of the extremal graph in $\mathcal{F}(n,\Delta)$ (see \cite{Liu2009,Liu2012}).
Naturally, we focus on finding possible degree sequence of the extremal graph in $\mathcal{B}(n,\Delta)$.
We have seen from Lemma \ref{lem6} that the extremal graph in $\mathcal{B}(n,\Delta)$ has at least two vertices of degree less than $\Delta$.

There exists an analogous problem of finding the minimum algebraic connectivity of regular graphs.
The cubic graph with minimum algebraic connectivity was determined in \cite{Brand2007}.
Recently, Abdi, Ghorbani and Imrich \cite{Abdi2021} obtained the asymptotic value of the minimum algebraic connectivity of cubic graphs,
and presented the structure of the quartic graph with minimum algebraic connectivity.
For bipartite case, we investigated the minimum algebraic connectivity of cubic bipartite graphs. 
Moreover, the unique cubic bipartite graph with minimum algebraic connectivity was also completely characterized.
The structure of the extremal graph with minimum algebraic connectivity is very similar to the extremal graph $B_{n}$.
So we think that this is possibly an effective approach for finding the extremal graph with maximum spectral radius in $\mathcal{B}(n,\Delta)$.

\section*{Acknowledgements}

%The authors would like to thank the anonymous referees for their helpful comments on improving the presentation of the paper.
This work was supported by National Natural Science Foundation of China (Nos. 12001498 and 11971445)
and Natural Science Foundation of Henan Province (No. 202300410377).

%\section*{References}

\end{document}